\documentclass[a4paper,12pt]{article}
\usepackage{hyperref}
\hypersetup{
    colorlinks=true,
    linktoc=all,
    linkcolor=red,     
    citecolor=blue,     
}
\usepackage{graphicx}
\usepackage{amsmath,amssymb,amsfonts,amsthm,graphics,enumitem}
\usepackage{latexsym, bm}
\usepackage{multicol}
\usepackage{indentfirst}

\usepackage{setspace}
\newtheorem{theorem}{Theorem}[section]
\newtheorem{lemma}{Lemma}[section]

\newtheorem{proposition}{Proposition}[section]
\newtheorem{definition}{Definition}[section]

\newtheorem{example}{Example}[section]

\newtheorem{remark}{Remark}[section]

\newtheorem{observation}{Observation}[section]
\newcommand{\ignore}[1]{}
\newtheorem{claimnum}{Claim}

\begin{document}
\noindent
\begin{spacing}{1}

\title {Rainbow Hamiltonian paths in graph collections under edge-size conditions}
\date{}

\author{
Xiaoxue Zhang\footnote{School of Mathematical Sciences, South China Normal University, Guangzhou, 510631, P. R. China.
E-mail: {\tt zhang\_xx1209@163.com}.}\;\;   \;\; Lihua You\footnote{Corresponding author. School of Mathematical Sciences, South China Normal University, Guangzhou, 510631, P. R. China.
E-mail: {\tt ylhua@scnu.edu.cn}.}
}
\maketitle
\begin{abstract}
Let $\mathbf{G}=\{G_1,\dots,G_{n-1}\}$ be a collection of not necessarily distinct graphs of order $n$ on the same vertex set $V$. A path $P$ with $V(P)\subseteq V$ is called \emph{rainbow} in $\mathbf{G}$, if there exists an injection $\phi\colon E(P)\to [n-1]$ such that $e\in E(G_{\phi(e)})$ for each $e\in E(P)$. In this paper, we show that if $n\ge3$, $e(G_i)> \binom{n-2}{2}+2$ for each $i\in[n-1]$ and $G_j$ is connected for some $j\in[n-1]$, then there exists a rainbow Hamiltonian path in $\mathbf{G}$ unless
$n=6$ with $G_1=G_2=\dots=G_{5}\cong K_2\vee 4K_1$, or $n=8$ with $G_1=G_2=\dots=G_{7}\cong K_3\vee 5K_1$. This corrects Theorem 6 in [Graphs
Combin., \textbf{41}(6)(2025), 119] and relaxes its hypothesis from requiring all $G_i$ to be connected to only requiring that at least one $G_i$ is connected, where $i\in[n-1]$. Moreover, we show that if $n\ge2$ and $e(G_i)\ge \binom{n-1}{2}$ for each $i\in[n-1]$, then $\mathbf{G}$ contains a rainbow Hamiltonian path unless $G_1=G_2=\dots=G_{n-1}\cong K_{n-1}\cup K_1$, or $n=4$ with $G_1=G_2=G_3\cong K_{1,3}$. As a consequence, Theorem 2 of [Discrete Math., \textbf{347}(2024), 114128] follows immediately.

\noindent
{\bf Keywords:} \ Rainbow Hamiltonian path; Graph collection; Edge-size condition

\noindent
{\bf MSC:} 05C38, 05C50
\end{abstract}

\section{Introduction}
A Hamiltonian path in a graph $G$ is a path that visits every vertex of $G$ exactly once. Finding sufficient conditions for the existence of a Hamiltonian path is a fundamental problem in extremal graph theory. Classical sufficient conditions include the Dirac-type condition, the Ore-type condition, the spectral radius condition, and so on. Recently, these classical results have been investigated in the context of graph collections, which can be seen as the rainbow versions of these results. Now, we give the definitions of graph collections and rainbow subgraphs.

Let $\mathbf{G}=\{G_1,\dots,G_s\}$ be a collection of not necessarily distinct graphs on the same vertex set $V$. For a graph $H$ with $V(H)\subseteq V$ and $|E(H)|\leq s$, if there exists an injection $\phi\colon E(H)\to [s]$ such that $e\in E(G_{\phi(e)})$ for each $e\in E(H)$, in other words, each edge of $H$ comes from a distinct graph $G_i$ with $i\in[s]$, then we say that $H$ is \emph{rainbow} in $\mathbf{G}$.

A great deal of research work on graph collections is available in \cite{Aharoni,Cheng2026,Cheng,Guo,Guo2026,He,Joos,Li2023,Li2024,LiandWang,Liu,MaandZhang,Sun,You,Zhang2026,Zhang}, bipartite graph collections \cite{Bradshaw,Chen,Chen2026,He2026,Hu,Ma,You2026}, directed graph collections \cite{Babinski,Chakraborti,ChengandLi,Gerbner,Li2025}, hypergraph collections \cite{Cheng2023,Cheng2025,Gao2022,Lu2022,Lu2021,Tang}, and so on. For more results, we refer the readers to the survey \cite{SunandWang}.

Several results concerning rainbow Hamiltonian paths in graph collections have been obtained.
In 2023, Li, Li and Li \cite{Li2023} proved that for a collection $\mathbf{G}=\{G_1,\dots, G_{n}\}$ of graphs of order $n$ on the same vertex set, if $\sigma_2(G_i)\ge n-1$ for each $i\in[n]$, then $\mathbf{G}$ contains a rainbow Hamiltonian path, where $\sigma_2(G_i)=\min\{d_{G_i}(u)+d_{G_i}(v): u,v\in V, u\neq v, uv\notin E(G_i)\}$. In 2024, He, Li and Feng \cite{He} investigated the spectral conditions for a collection of graphs that guarantee the existence of a rainbow Hamiltonian path.
In 2026, Cheng, Sun, Wang and Wei \cite{Cheng2026} proved that if $n\ge 2$ and $\mathbf{G}=\{G_1,\dots, G_{n-1}\}$ is a collection of graphs of order $n$ on the same vertex set such that $\delta(G_i)\geq\frac{n-1}{2}$ for each $i\in[n-1]$, then there exists a rainbow Hamiltonian path in $\mathbf{G}$.
In 2025, Liu, Zhang and Wang \cite{Liu} considered the edge-size condition for the existence of a rainbow Hamiltonian path and obtained the following result.

\begin{theorem}[\upshape\cite{Liu}]\label{thm1-1}
Let $n\ge 4$ and $\mathbf{G}=\{G_1, \dots, G_{n-1}\}$ be a collection of not necessarily distinct graphs of order $n$ on the same vertex set $V$ such that $G_i$ is connected and $e(G_i)> \binom{n-2}{2}+2$ for each $i\in [n-1]$. Then $\mathbf{G}$ contains a rainbow Hamiltonian path.
\end{theorem}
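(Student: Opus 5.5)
As stated, I do not expect to be able to prove this statement, because it appears to be false for small $n$. Take $n=6$ and $G_1=\dots=G_5\cong K_2\vee 4K_1$. Each $G_i$ is connected and has $1+8=9>\binom{4}{2}+2=8$ edges. However, a Hamiltonian path would have to separate the $4$ pairwise non-adjacent vertices by at least $3$ other vertices, and only $2$ are available. So there is not even an uncoloured Hamiltonian path. Similarly, $n=8$ with $K_3\vee 5K_1$ gives $3+15=18>\binom{6}{2}+2=17$ edges, and $5$ independent vertices cannot be separated by $3$ vertices. My plan is therefore to prove the corrected statement announced in the abstract: the conclusion holds unless $(n,G_i)$ is one of these two configurations, and it suffices that a single $G_j$ is connected.

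The approach is induction on $n$, with the small cases $n\le 8$ handled by direct case analysis; this is also where the two exceptions are identified. The starting observation is that each complement $\overline{G_i}$ has at most $\binom n2-\binom{n-2}{2}-3=2n-6$ edges. Hence every $G_i$ has at most one or two vertices of very small degree, and all other vertices have degree close to $n-1$.

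For the induction step, I choose a vertex $v$ together with a colour $c$ such that $d_{G_c}(v)$ is large, for example at least $2$, with a neighbour in a suitable position. Deleting $v$ and colour $c$ leaves the collection $\{G_i-v: i\ne c\}$ of $n-2$ graphs on $n-1$ vertices. I then check the hypotheses for this smaller collection. For the edge count, I choose $v$ to have small total degree, so that $e(G_i-v)\ge e(G_i)-d_{G_i}(v)>\binom{n-3}{2}+2$ holds. This works as long as $d_{G_i}(v)\le n-3$ for the relevant $i$. If every vertex has degree $n-2$ or more in some graph, then the graphs are nearly complete, and a greedy or Ore-type argument (in the spirit of the $\sigma_2$ result of Li, Li and Li) finishes the proof directly. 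For connectivity, I keep the connected $G_j$, choosing $v$ to be a non-cut vertex of $G_j$, for instance a leaf of a spanning tree. Induction then gives a rainbow Hamiltonian path $P=u_1\dots u_{n-1}$ of $V\setminus\{v\}$. I extend it by attaching $v$ to an endpoint via colour $c$. If $v$ is not adjacent in $G_c$ to either endpoint, I use a Pósa-type rotation instead: find $u_k$ with $vu_k\in G_c$, and an edge $u_{k+1}u_1$ or $u_{k-1}u_{n-1}$ in a colour that can be swapped with the colour of an edge of $P$. The many edges in every $G_i$ should make such a swap available.

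The main obstacle will be the degenerate situations. These include the case where the smaller collection falls into an exceptional configuration (the structure $K_2\vee 4K_1$ or $K_3\vee 5K_1$ on $n-1$ vertices), the case where the only connected graph loses connectivity, and the case where the low-degree vertices of different $G_i$ differ. For these I would first try to choose $v$ differently, for instance as a vertex of maximum degree in the union, so that the reduced collection avoids the exceptions. Failing that, I would analyse the near-extremal structure explicitly: an independent set of size about $n/2+1$ joined to a small clique. I would show that for $n\ge 7$ (respectively $n\ge 9$) such a structure has too few edges to satisfy the hypothesis, which confines the genuine exceptions to $n=6$ and $n=8$.
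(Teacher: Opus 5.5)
Your refutation is correct and coincides with the paper's treatment. For $n=6$ with $G_1=\dots=G_5\cong K_2\vee 4K_1$ ($9>8$ edges) and for $n=8$ with $G_1=\dots=G_7\cong K_3\vee 5K_1$ ($18>17$ edges), every hypothesis holds, yet a rainbow Hamiltonian path would be a Hamiltonian path in a single graph with an independent set of size $\frac n2+1$, which is impossible (the paper reads this off Ning's list in Theorem~\ref{thm1-2}; you check it directly).

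The inductive programme for the corrected statement is not needed to settle this one, and as written it is not a proof: the simultaneous choice of $v$ and $c$, the claim that otherwise the graphs are ``nearly complete'', and the colour-swapping rotation are all unjustified. The paper instead proves the corrected Theorem~\ref{thm1-3} by Kelmans shifting the whole collection while keeping vertex $1$ dominating in the connected graph, using Lemma~\ref{lem3-2} to force the edges $\{j,n+1-j\}$, taking the zigzag path $n,1,n-1,2,\dots$, and pulling back via Theorem~\ref{lem2-3} and Lemma~\ref{lem3-3}.
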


As they mentioned, Theorem \ref{thm1-1} extends the case $e(G)>\binom{n-2}{2}+2$ of the following result without characterizing its extremal graphs.

\begin{theorem}[\upshape\cite{Ning}]\label{thm1-2}
Let $G$ be a graph of order $n\ge 4$ with $\delta(G)\ge 1$. If $e(G)\ge \binom{n-2}{2}+2$, then $G$ contains a Hamiltonian path, unless $G\in \{K_1\vee (K_{n-3}\cup 2K_1),\, K_1\vee (K_{1,3}\cup K_1),\, K_{2,4},\, K_2\vee 4K_1,\, K_2\vee (3K_1\cup K_2),\, K_1\vee K_{2,5},\, K_3\vee 5K_1,\, K_2\vee (K_{1,4}\cup K_1),\, K_4 \vee 6K_1\}$.
\end{theorem}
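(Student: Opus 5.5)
The natural route is to reduce to Hamiltonian cycles and then use a degree-sequence argument. Add a new vertex $w$ adjacent to every vertex of $G$, and call the result $G'=G\vee K_1$, a graph of order $n+1$. Then $G$ has a Hamiltonian path if and only if $G'$ has a Hamiltonian cycle. The degrees of $G'$ are $d_i+1$ for the degrees $d_1\le\dots\le d_n$ of $G$, together with $n$ for $w$.

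Suppose $G$ has no Hamiltonian path. Chvátal's theorem applied to $G'$ gives some $k<\frac{n+1}{2}$ with $d_k+1\le k$ and $d_{n+1-k}+1<n+1-k$. Translated back to $G$, this means:
\begin{itemize}
\item $k$ vertices have degree at most $k-1$;
\item $n+1-2k$ further vertices have degree at most $n-1-k$;
\item the remaining $k-1$ vertices have degree at most $n-1$.
\end{itemize}
Since $\delta(G)\ge1$, we have $k\ge2$. Summing these degree bounds gives
\[
2e(G)\le f(k):=k(k-1)+(n+1-2k)(n-1-k)+(k-1)(n-1).
\]
The function $f$ is a convex quadratic in $k$, so on $2\le k\le\lfloor n/2\rfloor$ it is maximized at an endpoint.

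At $k=2$ we get $f(2)=n^2-5n+10=2\bigl(\binom{n-2}{2}+2\bigr)$. So the hypothesis forces equality in every degree bound. Equality means:
\begin{itemize}
\item two vertices of degree exactly $1$;
\item $n-3$ vertices of degree exactly $n-3$;
\item one dominating vertex.
\end{itemize}
Chasing the adjacencies shows $G\cong K_1\vee(K_{n-3}\cup 2K_1)$.

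At the other endpoint $k=\lfloor n/2\rfloor$, comparing $f(k)$ with $2\binom{n-2}{2}+4$ shows that $f(k)$ is smaller for all $n$ beyond a small threshold (roughly $n\ge 11$). So only finitely many small orders remain, and for those every intermediate $k$ must also be checked against the bound. In those cases the structure is again forced close to $K_{k-1}\vee(\text{sparse part on } n-k+1 \text{ vertices})$. Examples are $n=6,k=3$, which gives $K_2\vee 4K_1$; $n=8,k=4$, which gives $K_3\vee 5K_1$; and $n=10,k=5$, which gives $K_4\vee 6K_1$.

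The main obstacle is this finite range of small $n$. There the slack $f(k)/2-\binom{n-2}{2}-2$ is a small positive number rather than zero, so the degree conditions do not pin $G$ down uniquely. I would handle it case by case:
\begin{itemize}
\item Fix $n$ and $k$, and use the slack to bound how many degree conditions can be non-tight.
\item Enumerate the resulting configurations, tracking which of the $n+1-2k$ middle vertices can attach to the $k$ low-degree ones.
\item For each configuration, either exhibit a Hamiltonian path directly or identify it as one of the listed exceptions, such as $K_1\vee(K_{1,3}\cup K_1)$, $K_{2,4}$, $K_2\vee(3K_1\cup K_2)$, $K_1\vee K_{2,5}$ or $K_2\vee(K_{1,4}\cup K_1)$.
\end{itemize}

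A useful tool here is the Bondy–Chvátal $(n-1)$-closure. In $G$, repeatedly join any two nonadjacent vertices whose degree sum is at least $n-1$. This preserves the existence of a Hamiltonian path, so it reduces each configuration to a few canonical ones. Confirming that the list of exceptions is exactly complete, with no missed sporadic graph for $4\le n\le 10$, is the step I expect to be most delicate.
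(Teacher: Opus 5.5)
The paper does not prove this statement. It is quoted from Ning's work and used only to note that $K_2\vee 4K_1$ and $K_3\vee 5K_1$ strictly exceed the edge bound, so there is no in-paper argument to compare against. Judged on its own merits, your route (Chvátal's degree-sequence criterion applied to $G\vee K_1$) is sound and gives a self-contained proof. Everything you actually computed is correct:
\begin{itemize}
\item the translated condition on the degrees of $G$;
\item $k\ge 2$, from $\delta(G)\ge 1$;
\item $f(k)=3k^2-(2n+1)k+n^2-n$, with $f(2)=2\bigl(\binom{n-2}{2}+2\bigr)$;
\item the forced graph $K_1\vee(K_{n-3}\cup 2K_1)$ at $k=2$.
\end{itemize}

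The step you expect to be delicate is short, and the closure is not needed. Exactly, $f(2)-f(n/2)=\frac{(n-4)(n-10)}{4}$ for even $n$, and $f(2)-f(\frac{n-1}{2})=\frac{(n-5)(n-7)}{4}$ for odd $n$. Combined with strict convexity (and $f(3)=32<34=f(2)$ at $n=8$), the only pairs with $k\ge 3$ that survive are $(n,k)=(7,3)$ and $(10,5)$, where the bound is tight, and $(6,3)$ and $(8,4)$, where the slack is a single edge.

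In the tight cases the degree sequences are forced. For $(7,3)$ it is $(2,2,2,3,3,6,6)$, giving $K_2\vee(3K_1\cup K_2)$. For $(10,5)$ it is six vertices of degree $4$ and four of degree $9$, giving $K_4\vee 6K_1$.

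For $(6,3)$ there are four vertices of degree at most $2$. If the other two both have degree $5$, you get $K_2\vee 4K_1$. At degree sum $16$ their degrees are $(5,4)$, $(4,4)$ or $(5,3)$:
\begin{itemize}
\item $(5,4)$ gives $K_1\vee(K_{1,3}\cup K_1)$;
\item $(4,4)$ gives $K_{2,4}$ if the two are nonadjacent, and a traceable graph if they are adjacent;
\item $(5,3)$ gives a traceable graph.
\end{itemize}

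For $(8,4)$ there are five vertices of degree at most $3$. Degrees $(7,7,7)$ on the remaining three give $K_3\vee 5K_1$. At degree sum $34$ the options are:
\begin{itemize}
\item $(7,7,6)$, giving $K_2\vee(K_{1,4}\cup K_1)$;
\item $(7,7,5)$, giving a traceable graph;
\item $(7,6,6)$, giving $K_1\vee K_{2,5}$ if the two degree-$6$ vertices are nonadjacent, and a traceable graph otherwise.
\end{itemize}

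This recovers exactly the nine listed exceptions, so your plan closes completely. If you also want the converse, that the exceptions really are non-traceable, note that each has a vertex set $S$ whose deletion leaves more than $|S|+1$ components.
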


A direct calculation shows that among all the exceptional graphs in Theorem \ref{thm1-2}, we have $e(G)>\binom{n-2}{2}+2$ for $G\in\{K_2\vee 4K_1,K_3\vee 5K_1\}$, and $e(G)=\binom{n-2}{2}+2$ for all others. This yields two counterexamples to Theorem \ref{thm1-1}: the case $n=6$ with $G_1=G_2=\dots=G_{5}\cong K_2\vee 4K_1$; the case $n=8$ with $G_1=G_2=\dots=G_{7}\cong K_3\vee 5K_1$. These two graph collections satisfy all assumptions of Theorem \ref{thm1-1}, but contain no rainbow Hamiltonian path.

Motivated by the above observations, in this paper, we correct Theorem \ref{thm1-1}, relax its hypothesis from requiring all $G_i$ to be connected to only requiring that at least one $G_i$ is connected for $i\in[n-1]$, and obtain the following result.

\begin{theorem}\label{thm1-3}
Let $n\ge 3$ and $\mathbf{G}=\{G_1,\dots,G_{n-1}\}$ be a collection of not necessarily distinct graphs of order $n$ on the same vertex set $V$. If $e(G_i)>\binom{n-2}{2}+2$ for each $i\in [n-1]$ and there exists $j\in[n-1]$ such that $G_j$ is connected, then one of the following statements holds:
\begin{enumerate}[label=(\roman*), font=\upshape, itemsep=2pt, align=left, leftmargin=1em]
 \item there exists a rainbow Hamiltonian path in $\mathbf{G}$;
 \item $n=6$ and $G_1=G_2=\dots=G_{5}\cong K_2\vee 4K_1$;
 \item $n=8$ and $G_1=G_2=\dots=G_{7}\cong K_3\vee 5K_1$.
\end{enumerate}
\end{theorem}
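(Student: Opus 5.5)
I plan to argue by induction on $n$, combined with a path-extension argument in the style of the Bondy--Chvátal closure that is standard for rainbow Hamiltonicity. The base cases $n=3,4,5$ I will check by hand. For $n=3$ the hypothesis reads $e(G_i)>2$, so $G_1=G_2=K_3$ and a rainbow path is immediate. For $n=4,5$ each $G_i$ misses at most one or two edges of $K_n$, and a short case check finishes.

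For the inductive step I will use a counting lemma. Since $e(G_i)>\binom{n-2}{2}+2=\binom{n-1}{2}-(n-4)$, summing degrees gives: for each $i$ there are few vertices of small degree in $G_i$. In particular, I will look for a vertex $v$ such that deleting $v$ leaves a collection on $n-1$ vertices still satisfying the edge bound $e(G_i-v)>\binom{n-3}{2}+2$. This holds whenever $d_{G_i}(v)\le n-3$, because $\binom{n-2}{2}-\binom{n-3}{2}=n-3$. I then drop one graph $G_k$ that contains an edge at $v$, apply induction to the remaining $n-2$ graphs on $V\setminus\{v\}$ to obtain a rainbow Hamiltonian path $P$, and attach $v$ at an endpoint of $P$ using $G_k$.

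Two difficulties arise with this step.
\begin{enumerate}[label=(\alph*)]
\item A vertex of low degree in every $G_i$ may not exist. If $v$ has degree $n-1$ or $n-2$ in many $G_i$, I will handle it by a direct argument: high-degree vertices make it easy to reroute.
\item The endpoint of $P$ may fail to be adjacent to $v$ in $G_k$.
\end{enumerate}
For (b) I will use rotations. Take a rainbow path $P=x_1\dots x_{n-1}$ together with $v$ and a spare color $k$. If neither $vx_1$ nor $vx_{n-1}$ lies in $G_k$, then the usual crossing argument applies: some $x_i$ has $vx_{i+1}\in G_k$ while $x_1x_i$ lies in a color that can be swapped. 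The degree sum $d_{G_k}(v)+d_{G_c}(x_1)$ is large because the graphs are dense, and this yields a rainbow Hamiltonian path. The dense regime is also where the connectivity of $G_j$ enters: I will reserve $G_j$ as the spare color so that $v$ has at least one neighbour in it.

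The main obstacle, I expect, is the bookkeeping of exceptions inside the induction. When the smaller collection is itself exceptional (that is, $n-1=6$ or $8$ with all graphs equal to $K_2\vee4K_1$ or $K_3\vee5K_1$), or when connectivity is lost after deleting $v$, the induction hypothesis does not apply. I plan to treat these cases directly. For $n=7$ or $n=9$, a collection whose restriction to $V\setminus\{v\}$ is $K_2\vee 4K_1$ (respectively $K_3\vee5K_1$) in each color is very structured, and with the extra vertex one can build an explicit rainbow path. Similarly, for $n=6,8$ I must show that any collection other than the all-equal exceptional one has a rainbow path. For this I will use Theorem \ref{thm1-2}: a single $G_i$ without a Hamiltonian path must be one of the listed graphs, and only $K_2\vee4K_1$ and $K_3\vee5K_1$ exceed the edge bound. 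Then a swapping argument shows that if some $G_i$ differs from the others, a rainbow path exists. I expect these small exceptional verifications, rather than the general rotation step, to be the most delicate part.
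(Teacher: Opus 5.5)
There is a genuine gap. The inductive deletion step carries your whole plan, but it is unavailable in general, and the situations you defer to ``a direct argument'' are not corner cases.

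\textbf{The deletion step fails.} To invoke induction on the $n-2$ retained graphs on $V\setminus\{v\}$ you need $d_{G_i}(v)\le e(G_i)-\binom{n-3}{2}-3$ for \emph{every} retained $i$. At the threshold $e(G_i)=\binom{n-2}{2}+3$ this means $d_{G_i}(v)\le n-3$ in all of them at once, and nothing supplies such a vertex. For large $n$, take the following collection:
\begin{enumerate}[label=(\alph*)]
\item $G_1=G_2$ is $K_n$ minus all pairs inside a set $S$ and minus a matching from $S$ to $V\setminus S$. Choose $|S|=m\approx 2\sqrt n$ with $\binom m2\le 2n-6\le\binom{m+1}2$, so that exactly $2n-6$ edges are missing.
\item $G_3=\dots=G_{n-1}$ is the analogous graph for a set $S'$ disjoint from $S$.
\end{enumerate}
Every graph is connected and has exactly $\binom{n-2}{2}+3$ edges. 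Yet every vertex has degree at least $n-2$ in at least two of the $G_i$. So whichever graph you drop, some retained $G_i-v$ has at most $\binom{n-3}{2}+2$ edges, and the induction hypothesis fails for every choice of $v$. There is then no rainbow path on $V\setminus\{v\}$ to reroute, so your item (a) amounts to proving the theorem by other means.

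\textbf{Connectivity is not inherited.} If you reserve $G_j$ as the spare colour, the subcollection may contain no connected graph. The smaller statement is then false: all graphs equal to $K_{n-2}\cup K_1$ on $n-1$ vertices meet the edge bound for $n\ge 6$ and have no Hamiltonian path. If you keep $G_j$ instead, $G_j-v$ is disconnected whenever $v$ is a cut vertex, which the edge bound allows (e.g.\ $K_{n-1}$ plus a pendant edge).

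\textbf{The remaining steps are not yet arguments.}
\begin{enumerate}[label=(\alph*)]
\item In a rainbow rotation, the colour freed by deleting $x_ix_{i+1}$ depends on $i$. So no fixed pair $(G_k,G_c)$ has a degree sum that drives the crossing argument. Moreover, $v$ may have a single neighbour in $G_j$, and $x_1$ may be isolated in several colours ($K_{n-1}\cup K_1$ minus up to $n-5$ edges meets the bound).
\item For $n=6,8$, Theorem~\ref{thm1-2} concerns single graphs. Knowing that each $G_i$ has a Hamiltonian path does not by itself give a rainbow one, so your ``swapping argument'' would be the entire proof.
\item For $n=5$ a graph may miss four edges of $K_5$, not one or two.
\end{enumerate}

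\textbf{The missing idea is the paper's shifting.} Apply Kelmans operations to the whole collection in an order that keeps $G_1$ connected. This ends with every graph shifted and vertex $1$ universal in $G_1^{(t)}$ (Lemmas~\ref{lem2-2} and~\ref{lem2-4}). Rainbow Hamiltonian paths lift back to $\mathbf G$ by Theorem~\ref{lem2-3}. In shifted graphs the edge bound forces $\{j,n+1-j\}$, and hence $\{j,n-j\}$, in every graph except the two extremal structures (Lemma~\ref{lem3-2}). So the zigzag $1,n,2,n-1,\dots$ is rainbow, with $\{1,n\}$ taken from $G_1^{(t)}$. Lemma~\ref{lem3-3} then pulls the extremal structure back to $\mathbf G$. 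No vertex deletion or rotation is needed, and connectivity is used only to obtain the edge $\{1,n\}$.
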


Furthermore, we study the edge-size condition with no connectivity restriction on the $G_i$ ($i\in [n-1]$), and prove the following result.

\begin{theorem}\label{thm1-4}
Let $n\ge 2$ and $\mathbf{G}=\{G_1,\dots,G_{n-1}\}$ be a collection of not necessarily distinct graphs of order $n$ on the same vertex set $V$. If $e(G_i)\ge \binom{n-1}{2}$ for each $i\in [n-1]$, then one of the following statements holds:
\begin{enumerate}[label=(\roman*), font=\upshape, itemsep=2pt, align=left, leftmargin=1em]
 \item there exists a rainbow Hamiltonian path in $\mathbf{G}$;
 \item $G_1=G_2=\dots=G_{n-1}\cong K_{n-1}\cup K_1$;
 \item $n=4$ and $G_1=G_2=G_{3}\cong K_{1,3}$.
\end{enumerate}
\end{theorem}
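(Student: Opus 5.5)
Since $\binom{n-1}{2}>\binom{n-2}{2}+2$ exactly when $n-2>2$, i.e.\ $n\ge 5$, the plan is to derive Theorem \ref{thm1-4} from Theorem \ref{thm1-3} for $n\ge5$ and to treat $n\le 4$ by hand. For $n=2$ there is a single graph with $e(G_1)\ge 0$; the only failure is $G_1$ edgeless, which is $K_1\cup K_1=K_{n-1}\cup K_1$. For $n=3$ each $G_i$ has at least one edge. Two edges from different graphs forming a path on three vertices give a rainbow Hamiltonian path. Otherwise $G_1$ and $G_2$ are the same single edge, which is outcome (ii). For $n=4$ each $G_i$ has at least $3$ edges. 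Here I would do a short case analysis on the $4$-vertex graphs with at least $3$ edges: triangle plus isolated vertex, star $K_{1,3}$, $P_4$, and the denser graphs. Take $G_1$ not a triangle-with-isolated-vertex; then $G_1$ is connected and has a spanning tree. Build a rainbow $P_4$ by first choosing the middle edge and then extending with one edge from each of the other two graphs. The exceptional configurations should surface exactly when all three graphs are the same $K_{1,3}$ (no $P_4$ at all) or all three are the same $K_3\cup K_1$.

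For $n\ge5$, the first case is that some $G_j$ is connected. Then Theorem \ref{thm1-3} applies, and the exceptions (ii) and (iii) there have $n=6,8$. For them we check edge counts: $e(K_2\vee 4K_1)=1+8=9<10=\binom{5}{2}$ and $e(K_3\vee5K_1)=3+15=18<21=\binom72$. So these are excluded by the hypothesis and a rainbow Hamiltonian path exists.

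The remaining case is that every $G_i$ is disconnected. A disconnected graph on $n$ vertices has at most $\binom{n-1}{2}$ edges, with equality only for $K_{n-1}\cup K_1$. Hence each $G_i\cong K_{n-1}\cup K_1$, say $G_i=K[V\setminus\{v_i\}]\cup\{v_i\}$. If all $v_i$ coincide we get outcome (ii). Otherwise some $v_a\ne v_b$, and I would build the path explicitly.

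The only edge at $v_a$ comes from graphs $G_i$ with $v_i\ne v_a$, and $G_b$ is one of them. So place $v_a$ as an endpoint, joined to a neighbor $w\ne v_b$ using the edge from $G_b$. Then traverse the remaining vertices, making sure that each edge $xy$ assigned to $G_i$ avoids $v_i$. This is a system-of-distinct-representatives / greedy assignment. Each edge is missed only by the graphs whose isolated vertex is an endpoint of that edge, and each vertex is isolated in possibly many graphs.

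The main obstacle is this matching step when many $v_i$ are equal. I would order the vertices by multiplicity $m(v)=|\{i:v_i=v\}|$, with $\sum m(v)=n-1$. I would arrange the path so that the vertex $u$ of largest multiplicity is an endpoint. Then only one edge is forbidden to the $m(u)$ graphs isolating $u$, which allows a Hall's theorem verification: any set $S$ of graphs can use every path edge except those incident to isolated vertices of all graphs in $S$, and $|S|$ forces enough distinct isolated vertices when $S$ is large. The alternative is to prove this case by induction on $n$, deleting an endpoint vertex together with one graph.
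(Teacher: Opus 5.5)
\textbf{The gap is the case $n=4$.} This is exactly where exception (iii) arises, and none of your reductions reaches it. Theorem~\ref{thm1-3} is unavailable, since $\binom{3}{2}=\binom{2}{2}+2$ and the inequality is not strict. Your disconnected-case argument only covers collections in which every $G_i\cong K_3\cup K_1$. What you wrote for $n=4$ is a plan plus the claim that the exceptions ``should surface exactly'' at the two identical configurations. That claim is the whole content of the theorem for $n=4$, so it has to be proved.

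Moreover, the heuristic ``choose the middle edge from $G_1$, then extend by one edge each from $G_2,G_3$'' fails as stated. Take $G_1\cong K_{1,3}$ with center $c$ and $G_2=G_3$ the triangle on $V\setminus\{c\}$. Then $c$ must be an end of any rainbow Hamiltonian path, so the $G_1$-edge is never in the middle.

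The case can be filled in by hand:
\begin{enumerate}
\item[(a)] Suppose $G_1$ is the star at $c$ and $G_2$ has an edge $xy$ avoiding $c$; let $z$ be the fourth vertex. An edge $ca\in G_3$ gives the rainbow path $xycz$ if $a\in\{y,z\}$, and $yxcz$ if $a=x$. If $G_3$ has no edge at $c$, it is the triangle $xyz$, and $cxyz$ works. Hence a star forces all three graphs to be that same star.
\item[(b)] Suppose some $G_i\cong K_3\cup K_1$ with isolated vertex $w$ and some other $G_j$ is connected. Use an edge $wt\in G_j$ together with an edge of the third graph inside the triangle. If the third graph has no such edge, it is the star at $w$, and (a) applies.
\item[(c)] If all three graphs are copies of $K_3\cup K_1$, your Hall argument (below) works verbatim.
\item[(d)] If all three are connected non-stars, take a Hamiltonian path $abcd$ of $G_1$ and see how Hall's condition can fail. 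Either some $G_i$ misses $\{ab,bc,cd\}$, forcing $G_i=\{ac,ad,bd\}$, or $G_2,G_3$ meet $\{ab,bc,cd\}$ in the same single edge. Each subcase gives a rainbow path directly.
\end{enumerate}
Alternatively, you can import the paper's shifted analysis for $n=4$ (the sets $I,J$) together with Lemma~\ref{lem3-5}.

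\textbf{Cases $n\le3$ and $n\ge5$.} Here your argument is correct and takes a genuinely different route from the paper. The paper never splits by connectivity. It shifts the whole collection and uses Lemma~\ref{lem3-2}(ii) to put every edge $e_j$ ($j\ge2$) and $e'_j$ of the zigzag path into every shifted graph, so that only $\{1,n\}$ matters. If $\{1,n\}$ is absent from all shifted graphs, vertex $n$ is isolated in all of them, so they are all $K_{n-1}\cup K_1$; Lemma~\ref{lem3-4} then pulls this back along the Kelmans sequence.

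You instead feed the connected case into Theorem~\ref{thm1-3}. This is correct: $\binom{n-1}{2}=\binom{n-2}{2}+n-2$, and the counts $9<10$ and $18<21$ exclude its exceptions. You then use the extremal fact that a disconnected graph with $\binom{n-1}{2}$ edges is $K_{n-1}\cup K_1$ to reduce the all-disconnected case to an explicit assignment problem. This avoids Lemma~\ref{lem3-2}(ii) and the un-shifting Lemma~\ref{lem3-4}, at the price of depending on Theorem~\ref{thm1-3} and its connectivity-preserving shifting.

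\textbf{The Hall step needs one more detail.} Make both ends of the path the two most frequent isolated vertices. Putting only the most frequent one at an end is not enough: if exactly two vertices $u,w$ are isolated and $uw$ is a path edge, the set of all $n-1$ graphs can use only $n-2$ edges. With both ends chosen this way, take a set $S$ of graphs and let $W=\{v_i:i\in S\}$.
\begin{enumerate}
\item[$\bullet$] If $|W|\ge3$, $S$ loses no edge.
\item[$\bullet$] If $|W|=2$, $S$ loses at most the edge $W$. This is enough unless $|S|=n-1$, and then $W$ is the pair of endpoints, which is not a path edge.
\item[$\bullet$] If $|W|=1$, either $S$ loses one edge at an end, where $m(u)\le n-2$, or two edges at an interior vertex, where $m(u)\le (n-1)/3\le n-3$.
\end{enumerate}
This also settles case (c) above.
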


Here, we introduce some basic notation and concepts. For any terminology or notation that are not explicitly defined, we refer the reader to \cite{Bondy}.
All graphs considered in this paper are simple, finite and undirected.
Let $G=(V(G),E(G))$ be a simple graph with vertex set $V(G)$ and edge set $E(G)$, where $|E(G)|=e(G)$. For an edge $uv$ of $G$, we sometimes write $\{u,v\}$ for this edge.
For a vertex $u\in V(G)$, the set of neighbors of $u$ in $G$ is denoted by $N_{G}(u)$, $d_{G}(u)=|N_{G}(u)|$ is the \emph{degree} of  $u$ in $G$, and $N_{G}[u]=N_{G}(u)\cup \{u\}$ is the closed neighborhood of $u$ in $G$. For $u,v\in V(G)$, a \emph{$u$-$v$ walk} and a \emph{$u$-$v$ path} are a walk and a path starting at $u$ and ending at $v$, respectively. The \emph{distance} between $u$ and $v$ is the length of the shortest $u$-$v$ path in $G$, and is denoted by $d_{G}(u,v)$.
For $X\subseteq V(G)$, we use $G[X]$ to denote the subgraph of $G$ induced by $X$. For $X\subseteq V(G)$ and $Y\subseteq V(G)\backslash X$, $E_{G}[X,Y]$ denotes the set of edges of $G$ between $X$ and $Y$.

An \emph{independent set} in a graph is a set of vertices no two of which are adjacent.
The \emph{union} of two graphs $G$ and $H$ is the graph $G\cup H$ with vertex set $V(G)\cup V(H)$ and edge set $E(G)\cup E(H)$. For two vertex-disjoint graphs $G$ and $H$, the \emph{join} of $G$ and $H$ is obtained from $G\cup H$ by joining each vertex of $G$ to each vertex of $H$, denoted by $G\vee H$.
We use $C_n$, $P_n$, $K_n$ and $K_{a,b}$ to denote the cycle of order $n$, the path of order $n$, the complete graph of order $n$ and the complete bipartite graph with partitions of size $a$ and $b$, respectively. For positive integers $n$ and $a\le b$, let $[n]=\{1,2,\dots,n\}$ and $[a,b]=\{a,a+1,\dots,b\}$. For convenience, we take the vertex set $V=[n]$ in Theorems \ref{thm1-3} and \ref{thm1-4}.

The rest of this paper is organized as follows. In Section 2, we introduce an important technique and present several auxiliary results. In Section 3, we present the proofs of Theorems \ref{thm1-3} and \ref{thm1-4}. In Section 4, we conclude this paper.

\section{The Kelmans operation}
In this section, we introduce the Kelmans operation, an important shifting technique originally introduced by Kelmans \cite{Kelmans} and widely used in extremal and spectral graph theory. It is a key tool for our proofs, and we present several auxiliary results that will be used later.

\begin{definition}[\upshape\cite{Kelmans}]\label{def-1}
Let $x,y$ be two vertices of a graph $G$ on vertex set $[n]$. The Kelmans operation on $G$ from $x$ to $y$ produces a new graph $\mathcal{K}_{x,y}(G)$, which is obtained by deleting all edges between $y$ and $N_G(y)\setminus N_G[x]$, and adding all edges between $x$ and $N_G(y)\setminus N_G[x]$.
\end{definition}

\begin{definition}\label{def-2}
If $\mathcal{K}_{x,y}(G)=G$ for every pair $(x,y)$ satisfying $x<y$, then $G$ is said to be \emph{shifted}.
\end{definition}

The following observation follows immediately and we omit its proof.

\begin{observation}\label{ob1}
Let $\mathcal{K}_{x,y}(G)$ be the graph obtained from $G$ on vertex set $[n]$ by a Kelmans operation and $z\in [n]\setminus\{x,y\}$. Then
\begin{enumerate}[label=(\arabic*), font=\upshape, itemsep=2pt, align=left, leftmargin=1em]
 \item $e(\mathcal{K}_{x,y}(G))=e(G)$;
 \item if $yz\in E(\mathcal{K}_{x,y}(G))$, then $xz\in E(\mathcal{K}_{x,y}(G))$;
 \item if $xz,yz\in E(\mathcal{K}_{x,y}(G))$, then $xz,yz\in E(G)$; if $xz,yz\notin E(\mathcal{K}_{x,y}(G))$, then $xz,yz\notin E(G)$; if $xz\in E(\mathcal{K}_{x,y}(G))$ and $yz\notin E(\mathcal{K}_{x,y}(G))$, then exactly one of $xz$ and $yz$ belongs to $E(G)$;
 \item $xy\in E(\mathcal{K}_{x,y}(G))$ if and only if $xy\in E(G)$;
 \item $uv\in E(\mathcal{K}_{x,y}(G))$ if and only if $uv\in E(G)$, where $u,v\in [n]\setminus\{x,y\}$;
 \item if $G$ is a shifted graph, then for any $\{x_1,x_2,y_1,y_2\}\subseteq[n]$ such that $x_1\neq x_2$ and $x_i\le y_i$ for $1\le i\le 2$, $y_1y_2\in E(G)$ implies $x_1x_2\in E(G)$.
 \item iterating the Kelmans operation on pairs $(x,y)$ of vertices in $[n]$ with $x<y$ eventually yields a shifted graph.
\end{enumerate}
\end{observation}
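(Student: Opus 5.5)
We verify the seven items of Observation \ref{ob1} one by one, working directly from Definition \ref{def-1}. Write $S=N_G(y)\setminus N_G[x]$, so that $\mathcal{K}_{x,y}(G)$ is obtained from $G$ by deleting the edges $\{ys: s\in S\}$ and adding the edges $\{xs: s\in S\}$.

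\emph{Items (1), (4) and (5).} The vertices $x,y$ are not in $S$, since $y\notin N_G(y)$ and $x\in N_G[x]$. Each $s\in S$ is adjacent to $y$ but not to $x$ in $G$. Hence exactly $|S|$ edges are deleted and $|S|$ new edges are added, which gives (1). Every modified edge has exactly one endpoint in $\{x,y\}$ and its other endpoint in $S$. So the edge $xy$ is untouched, giving (4), and edges avoiding $\{x,y\}$ are untouched, giving (5).

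\emph{Items (2) and (3).} Fix $z\notin\{x,y\}$ and split into two cases.
\begin{itemize}
\item If $z\in S$, then in $G$ we have $xz\notin E$ and $yz\in E$. In $\mathcal{K}_{x,y}(G)$ we have $xz\in E$ and $yz\notin E$.
\item If $z\notin S$, the pair $(xz,yz)$ is unchanged. Moreover, $z\notin S$ means $yz\in E(G)$ implies $xz\in E(G)$.
\end{itemize}
For (2): in the first case $yz\notin E(\mathcal{K}_{x,y}(G))$, so the hypothesis of (2) forces the second case, where $yz\in E$ gives $xz\in E$. For (3): the configurations "both in" and "both out" can only occur in the second case, where the pair equals its value in $G$. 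The configuration "$xz$ in, $yz$ out" arises either from the first case, where $G$ has only $yz$, or from the second case with $xz\in E(G)$ and $yz\notin E(G)$. In either situation exactly one of $xz,yz$ lies in $E(G)$.

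\emph{Item (6).} First note that if $G$ is shifted and $a<b$, then $\mathcal{K}_{a,b}(G)=G$ forces $N_G(b)\setminus\{a\}\subseteq N_G[a]$. Otherwise $S\neq\emptyset$ and the operation changes the graph. Now suppose $y_1y_2\in E(G)$, and move one endpoint at a time.
\begin{itemize}
\item If $x_1<y_1$, apply the fact above to $(a,b)=(x_1,y_1)$ with the neighbor $y_2$ of $y_1$. When $y_2\neq x_1$ this gives $x_1y_2\in E(G)$. When $y_2=x_1$, the edge $x_1y_1$ already exists.
\item Then replace $y_2$ by $x_2$ in the same way.
\end{itemize}
The main obstacle is the degenerate coincidences among the indices, for example $x_2=y_1$ or $y_2=x_1$. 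In these cases the intermediate edge may already be the target edge $x_1x_2$, or the endpoint may be replaced in the other order. I would handle them by a short case check, using that $x_1\neq x_2$ keeps the target a genuine edge. Applying the two replacements in whichever order avoids the coincidence should resolve every case.

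\emph{Item (7).} Use the potential $\Phi(G)=\sum_{uv\in E(G)}(u+v)$. Every operation $\mathcal{K}_{x,y}$ with $x<y$ that changes $G$ replaces at least one edge $ys$ by $xs$. This strictly decreases $\Phi$, since $x<y$. Because $\Phi$ is a nonnegative integer, only finitely many nontrivial operations can be performed. When no nontrivial operation remains, the graph is shifted by Definition \ref{def-2}.
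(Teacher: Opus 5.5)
Your proposal is correct. The paper gives no proof here; it says the observation "follows immediately" from Definition~\ref{def-1} and omits it. Your item-by-item verification via $S=N_G(y)\setminus N_G[x]$ is exactly that routine check, and the potential $\Phi(G)=\sum_{uv\in E(G)}(u+v)$ makes the termination claim in (7) rigorous.

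The one place you leave a hedge ("should resolve every case") is (6), and it closes in a line. Replacing $y_1$ by $x_1$ first and then $y_2$ by $x_2$ fails only when $y_2=x_1$. The reverse order fails only when $y_1=x_2$. Both cannot happen at once, since that would give $x_1=y_2\ge x_2=y_1\ge x_1$, contradicting $x_1\neq x_2$.
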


By (2) of Observation \ref{ob1}, we easily get the following property of shifted graphs.

\begin{proposition}
If $G$ is a shifted graph on vertex set $[n]$, then $d_{G}(1)\ge d_{G}(2)\ge \dots \ge d_{G}(n)$.
\end{proposition}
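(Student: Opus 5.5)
To prove the proposition, I would show directly that for every pair $x<y$ in $[n]$ we have $d_G(x)\ge d_G(y)$. The plan is to build an injection from the neighbourhood of $y$ into the neighbourhood of $x$, using only that $G=\mathcal{K}_{x,y}(G)$ because $G$ is shifted (Definition \ref{def-2}). Chaining these inequalities over consecutive pairs then gives $d_G(1)\ge d_G(2)\ge\dots\ge d_G(n)$.

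Fix $x<y$. Since $G$ is shifted, $\mathcal{K}_{x,y}(G)=G$. For any $z\in[n]\setminus\{x,y\}$ with $yz\in E(G)=E(\mathcal{K}_{x,y}(G))$, part (2) of Observation \ref{ob1} gives $xz\in E(\mathcal{K}_{x,y}(G))=E(G)$. Hence
\[
N_G(y)\setminus\{x\}\subseteq N_G(x)\setminus\{y\}.
\]
Edges between $x$ and $y$ contribute symmetrically: $x\in N_G(y)$ if and only if $y\in N_G(x)$. Adding this common term to both sides of the inclusion yields $d_G(y)=|N_G(y)\setminus\{x\}|+[xy\in E(G)]\le |N_G(x)\setminus\{y\}|+[xy\in E(G)]=d_G(x)$.

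Applying this with $(x,y)=(i,i+1)$ for each $i\in[n-1]$ gives the full chain. There is no real obstacle here. The only point requiring care is the vertex pair $\{x,y\}$ itself, since part (2) of Observation \ref{ob1} is stated only for $z\notin\{x,y\}$; this is handled by separating out the edge $xy$ as above.
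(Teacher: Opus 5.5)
Your proof is correct and matches the paper's argument: shiftedness gives $\mathcal{K}_{x,y}(G)=G$ for $x<y$, part (2) of Observation~\ref{ob1} yields $N_G(y)\setminus\{x\}\subseteq N_G(x)\setminus\{y\}$, and hence $d_G(y)\le d_G(x)$. Your explicit handling of the possible edge $xy$ fills in the step that the paper leaves implicit.
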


\begin{proof}
Since $G$ is a shifted graph, by Definition \ref{def-2}, we have $\mathcal{K}_{x,y}(G)=G$ for any $x,y\in[n]$ with $x<y$. By (2) of Observation \ref{ob1}, $N_{G}(y)\setminus\{x\}\subseteq N_{G}(x)\setminus\{y\}$, which implies $d_{G}(y)\leq d_{G}(x)$. Therefore, $d_{G}(1)\ge d_{G}(2)\ge \dots \ge d_{G}(n)$.
\end{proof}

By (7) of Observation \ref{ob1}, every graph admits a shifted graph obtained via the Kelmans operations. We next point out an important caveat concerning shifted graphs.

For a graph $G$ and $x,y\in V(G)$, if $d_G(x,y)=1$, we often write $xy\in E(G)$. To avoid ambiguity, we sometimes write $\{x,y\}\in E(G)$.

\begin{remark}
It is worth noting that a connected graph $G$ may be disconnected by a Kelmans operation, and the shifted graph may not be unique, as it depends on the order in which we perform the Kelmans operations. We illustrate this with the following two examples.
\begin{example}
Let $n\ge 7$ and $G\cong P_n$ be the path on vertex set $[n]$ with $E(G)=\{\{i,i+1\}:1\le i\le n-1\}$.
\begin{itemize}[leftmargin=*]
\item Applying $\mathcal{K}_{1,2}$, $\mathcal{K}_{1,3}$, \dots, $\mathcal{K}_{1,n-1}$ in this order yields the star $K_{1,n-1}$ with edge set $\{\{1,i\}: 2\le i\le n\}$.
\item Applying $\mathcal{K}_{1,n}$, $\mathcal{K}_{1,{n-1}}$, \dots, $\mathcal{K}_{1,4}$ in this order yields the disconnected graph $(K_1\vee (K_2\cup(n-4)K_1))\cup K_1$ with edge set $\{\{1,i\}: 2\le i\le n-1\}\cup \{\{2,3\}\}$.

\item Applying $\mathcal{K}_{2,n}$, $\mathcal{K}_{4,5}$, \dots, $\mathcal{K}_{4,{n-2}}$, $\mathcal{K}_{1,4}$ and $\mathcal{K}_{4,{n-1}}$ in this order yields the disconnected graph $(K_1\vee (P_3\cup(n-6)K_1))\cup 2K_1$ with edge set $\{\{1,i\}: 2\le i\le n-2\}\cup \{\{2,3\},\{2,4\}\}$.
\end{itemize}

Clearly, all three resulting graphs are shifted by direct checking. The first one is connected and the other two are disconnected. Moreover, they are pairwise non-isomorphic.
\end{example}

\begin{example} Let $n\ge 5$ and $G\cong K_1\vee P_{n-1}$ be a graph on vertex set $[n]$ with $E(G)=\{\{1,i\}:2\le i\le n\}\cup\{\{i,i+1\}:2\le i\le n-1\}$.

\begin{itemize}[leftmargin=*]
\item Applying $\mathcal{K}_{2,3}$, $\mathcal{K}_{2,4}$, \dots, $\mathcal{K}_{2,n-1}$ in this order yields the graph $K_1\vee K_{1,n-2}$ with edge set $\{\{1,i\}: 2\le i\le n\}\cup \{\{2,i\}: 3\le i\le n\}$.
\item Applying $\mathcal{K}_{2,n}$, $\mathcal{K}_{2,{n-1}}$, \dots, $\mathcal{K}_{2,5}$ in this order yields the graph $K_1\vee((K_1\vee (K_2\cup(n-5)K_1))\cup K_1)$ with edge set $\{\{1,i\}: 2\le i\le n\}\cup\{\{2,i\}: 3\le i\le n-1\}\cup \{\{3,4\}\}$.
\end{itemize}

Clearly, both resulting graphs are shifted and connected, yet they are non-isomorphic by direct checking.
\end{example}
\end{remark}


We next present a lemma showing that there exists an order of performing the Kelmans operations such that a connected graph remains connected after each Kelmans operation. To this end, we first establish the following result.

\begin{lemma}\label{lem2-1}
Let $G$ be a connected graph of order $n\ge 2$, and let $x,y\in V(G)$. Then $\mathcal{K}_{x,y}(G)$ is connected if and only if $d_{G}(x,y)\le 2$.
\end{lemma}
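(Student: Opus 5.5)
We prove the two directions separately, using the explicit description of $G'=\mathcal{K}_{x,y}(G)$. Set $A=N_G(y)\setminus N_G[x]$. Then $G'$ is obtained from $G$ by deleting the edges $\{y,a\}$ and adding the edges $\{x,a\}$ for $a\in A$. In particular, $y$ keeps exactly its neighbours in $N_G[x]$. By Observation~\ref{ob1}(4),(5), all edges not incident to $x$ or $y$ are unchanged, and so is the status of $xy$. The case $x=y$, or $A=\emptyset$, is trivial because then $G'=G$, so assume $x\ne y$ and $A\neq\emptyset$.

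\emph{Necessity.} Suppose $d_G(x,y)\ge 3$. Then $N_G(y)\cap N_G[x]=\emptyset$, so $A=N_G(y)$, which is nonempty since $G$ is connected and $n\ge 2$. In $G'$ every edge at $y$ is removed and no edge at $y$ is added, so $y$ is isolated in $G'$. Since $n\ge 2$, $G'$ is disconnected.

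\emph{Sufficiency.} Suppose $d_G(x,y)\le 2$. We show that every vertex is joined to $x$ in $G'$.
\begin{itemize}
\item Since $y$ has a neighbour in $N_G[x]$, either $xy\in E(G)=E(G')$, or there is a common neighbour $w$ of $x$ and $y$. In the latter case $w\notin A$, so $yw$ is retained, and $xw$ is retained because edges at $x$ are never deleted. Hence $y$ is joined to $x$ in $G'$.
\item Take any other vertex $v$ and a $v$–$x$ path $P$ in $G$. Stop $P$ at the first time it meets $x$ or $y$, giving a subpath $v=v_0,\dots,v_k$ whose internal vertices avoid $\{x,y\}$. Its edges among $[n]\setminus\{x,y\}$ survive in $G'$. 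If $v_k=x$, the last edge $v_{k-1}x$ also survives, and $v$ is joined to $x$.
\item If instead $v_k=y$, consider the last edge $v_{k-1}y$. When $v_{k-1}\in A$, the edge $v_{k-1}x$ is present in $G'$. When $v_{k-1}\notin A$, the edge $v_{k-1}y$ survives, and $y$ is joined to $x$ by the first step.
\end{itemize}
Therefore $G'$ is connected.

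The only delicate point is the bookkeeping of which edges at $y$ survive: exactly those to $N_G[x]$. This is precisely what makes $d_G(x,y)\le 2$ equivalent to $y$ not becoming isolated.
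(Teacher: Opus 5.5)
Your proof is correct and takes essentially the same approach as the paper. For necessity, $y$ becomes isolated when $N_G(y)\cap N_G[x]=\emptyset$; for sufficiency, $y$ stays joined to $x$ (via $xy$ or a common neighbour $w$) and every deleted edge $ya$ with $a\in A$ is compensated by the new edge $xa$. The only difference is bookkeeping: you show each vertex reaches $x$ by truncating a path at its first visit to $\{x,y\}$, whereas the paper takes an arbitrary $u$--$v$ path in $G$, replaces each deleted edge $yz$ by the detour $yxz$ or $ywxz$, and extracts a path from the resulting walk.
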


\begin{proof}
Suppose that $\mathcal{K}_{x,y}(G)$ is connected. If $d_{G}(x,y)\ge 3$, then $xy\notin E(G)$ and $N_{G}(x)\cap N_{G}(y)=\emptyset$. It follows that $d_{\mathcal{K}_{x,y}(G)}(y)=0$, which implies that $\mathcal{K}_{x,y}(G)$ is not connected, a contradiction. Thus $d_{G}(x,y)\le 2$.

Suppose that $d_{G}(x,y)\le 2$. Then either $xy\in E(G)$ or $N_{G}(x)\cap N_{G}(y)\neq\emptyset$.
If $xy\in E(G)$, then $xy\in E(\mathcal{K}_{x,y}(G))$, and for each $z\in N_{G}(y)\setminus N_{G}[x]$, $yz\notin E(\mathcal{K}_{x,y}(G))$ and $yxz$ is a path in $\mathcal{K}_{x,y}(G)$. If $N_{G}(x)\cap N_{G}(y)\neq\emptyset$, we take $w\in N_{G}(x)\cap N_{G}(y)$, then $xw,yw\in E(\mathcal{K}_{x,y}(G))$, and for each $z\in N_{G}(y)\setminus N_{G}[x]$, $yz\notin E(\mathcal{K}_{x,y}(G))$ and $ywxz$ is a path in $\mathcal{K}_{x,y}(G)$.

For any two vertices $u,v\in V(\mathcal{K}_{x,y}(G))$, there exists a $u$-$v$ path $P$ in $G$ since $G$ is connected. If $yz\notin E(P)$ for each $z\in N_{G}(y)\setminus N_{G}[x]$, then $P$ is also a $u$-$v$ path in $\mathcal{K}_{x,y}(G)$. If $yz\in E(P)$ for some $z\in N_{G}(y)\setminus N_{G}[x]$, then there are at most two such edges on $P$. We replace each such edge $yz$ with either $yxz$ (if $xy\in E(G)$) or $ywxz$ (if $N_{G}(x)\cap N_{G}(y)\neq\emptyset$), this yields a $u$-$v$ walk in $\mathcal{K}_{x,y}(G)$, which further contains a $u$-$v$ path. Thus $\mathcal{K}_{x,y}(G)$ is connected.
\end{proof}

\begin{lemma}\label{lem2-2}
Let $G$ be a connected graph on vertex set $[n]$, where $n\ge 2$. Then there exists a finite sequence of the Kelmans operations that transforms $G$ into a connected shifted graph with vertex $1$ of degree $n-1$, and every graph in this sequence is connected.

\end{lemma}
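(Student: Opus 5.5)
Since Lemma \ref{lem2-1} says exactly when one Kelmans operation preserves connectivity, the plan is to apply only operations $\mathcal{K}_{x,y}$ with $x<y$ and $d(x,y)\le 2$ in the current graph, and to reach a graph with $d(1)=n-1$ first. Observation \ref{ob1}(1) gives that the edge count never changes. For termination I use the potential $\Phi(G)=\sum_{uv\in E(G)}(u+v)$. If $\mathcal{K}_{x,y}$ with $x<y$ changes $G$, each moved edge $yz$ is replaced by $xz$, so $\Phi$ drops by $y-x\ge 1$ per moved edge. Since $\Phi$ is a nonnegative integer, any sequence of non-trivial operations with $x<y$ is finite.

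\textbf{Stage 1: make vertex $1$ dominating.} Start with the connected graph $G$. While $d_G(1)<n-1$, choose a vertex $z\notin N_G[1]$ at distance exactly $2$ from $1$. Such a $z$ exists: the graph is connected and $1$ is not adjacent to everything, so some vertex has distance $2$ from $1$. Let $y$ be a common neighbour of $1$ and $z$; then $y>1$. Apply $\mathcal{K}_{1,y}$. Since $d(1,y)=1\le 2$, Lemma \ref{lem2-1} keeps the graph connected. Because $z\in N(y)\setminus N[1]$, the edge $1z$ is added, so $d(1)$ strictly increases and Observation \ref{ob1}(2),(3) show no neighbour of $1$ is lost. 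After at most $n-1$ steps we have a connected graph with $d(1)=n-1$.

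\textbf{Stage 2: finish shifting while keeping $d(1)=n-1$.} Any graph with $d(1)=n-1$ is connected, and every pair $x,y$ has distance at most $2$ through vertex $1$. So Lemma \ref{lem2-1} shows every further Kelmans operation keeps the graph connected. It also keeps $d(1)=n-1$:
\begin{itemize}
\item if $1\notin\{x,y\}$, the edges at $1$ are untouched by Observation \ref{ob1}(5);
\item if $x=1$, then $N(y)\setminus N[1]=\emptyset$, so the operation is trivial.
\end{itemize}
Now repeatedly apply any non-trivial $\mathcal{K}_{x,y}$ with $x<y$. By the $\Phi$ argument this stops after finitely many steps. It stops exactly when $\mathcal{K}_{x,y}(G)=G$ for all $x<y$, which is Definition \ref{def-2}, so the final graph is shifted, connected, and has $d(1)=n-1$. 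Observation \ref{ob1}(7) also asserts that iteration ends in a shifted graph, but the explicit potential gives termination directly.

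The main obstacle is Stage 1: a Kelmans operation can destroy connectivity, as the examples show, so the choice $x=1$ with $y$ a common neighbour of $1$ and a distance-$2$ vertex is essential. I also need $y\ne 1$, which holds because $z\notin N[1]$ forces $y\neq z$ and $y\neq 1$. Once vertex $1$ dominates, every step is automatically safe.
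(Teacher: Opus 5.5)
Your proof is correct and takes the same two-stage route as the paper (its Case 2 followed by Case 1): first raise $d(1)$ by applying $\mathcal{K}_{1,u}$, where $u$ is a common neighbour of $1$ and a vertex at distance $2$; then, once $d(1)=n-1$ forces every distance to be at most $2$, shift freely. Your potential $\Phi$ and your explicit check that $d(1)=n-1$ is preserved make rigorous two points the paper only asserts via Observation~\ref{ob1}(7) and Definition~\ref{def-1}, but one citation needs a small repair: Observation~\ref{ob1}(5) only covers the edges $1w$ with $w\notin\{x,y\}$, whereas $1x$ and $1y$ survive because $1\in N(x)$ gives $1\notin N(y)\setminus N[x]$.
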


\begin{proof}
We proceed by distinguishing two cases based on $d_G(1)$.

\vspace{6pt}
$\mathbf{Case~1.}$ $d_G(1)=n-1$.
\vspace{6pt}

If $G$ is shifted, then we are done.

If $G$ is not shifted, then we apply the Kelmans operations with $x_i<y_i$ on $G$ to generate the sequence $G=G^{(0)},G^{(1)},G^{(2)},\dots,G^{(t)}$ such that $G^{(i)}=\mathcal{K}_{x_i,y_i}(G^{(i-1)})$ for each $1\le i\le t$, which terminates at a shifted graph $G^{(t)}$.

By Definition \ref{def-1}, $d_{G^{(i)}}(1)=n-1$ for each $1\le i\le t$ since ${G}^{(i)}=\mathcal{K}_{x_i,y_i}(G^{(i-1)})$ with $x_i<y_i$, then $d_{G^{(i-1)}}(x_i,y_i)\le 2$ for each $1\le i\le t$, and thus $G^{(i)}$ is connected for each $1\le i\le t$ by Lemma \ref{lem2-1}.

\vspace{6pt}
$\mathbf{Case~2.}$ $d_G(1)<n-1$.
\vspace{6pt}

Since $d_{G}(1)< n-1$ and $G$ is connected, there is a vertex $v$ satisfying $d_{G}(1,v)=2$. Without loss of generality, we assume $\{1,u\},\{u,v\}\in E(G)$.
The Kelmans operation on $G$ from $1$ to $u$ produces a new graph $\mathcal{K}_{1,u}(G)$, which we denote by $G^{(1)}$.
Then $\{1,v\}\in E(G^{(1)})$ and $\{1,z\}\in E(G^{(1)})$ for each $z\in N_{G}(1)$, which implies that the degree of vertex $1$ strictly increases. Since $d_{G}(1,u)=1$, by Lemma \ref{lem2-1}, $G^{(1)}$ is connected.

If $d_{G^{(1)}}(1)< n-1$, then we repeat the above procedure to obtain a graph $G^{(2)}$ satisfying $d_{G^{(2)}}(1)>d_{G^{(1)}}(1)$, and $G^{(2)}$ is connected.

Iterating this procedure, and noting that each Kelmans operation strictly increases the degree of vertex $1$, this process terminates after at most $n-1-d_{G}(1)$ Kelmans operations.

Therefore, we obtain a sequence $G=G^{(0)},G^{(1)},\dots,G^{(s)}$ such that $G^{(i)}=\mathcal{K}_{x_i,y_i}(G^{(i-1)})$ (in fact, $x_i=1$) with $d_{G^{(i)}}(1)>d_{G^{(i-1)}}(1)$ for each $1\le i\le s$, and $d_{G^{(s)}}(1)=n-1$.

We continue applying the Kelmans operations with $x_i<y_i$ on $G^{(s)}$ to generate the sequence $G^{(s)},G^{(s+1)},$ $\dots,G^{(t)}$, which terminates at a shifted graph $G^{(t)}$.

Similar to Case 1, if $t=s$, then we are done; if $t>s$, by Definition \ref{def-1}, $d_{G^{(i)}}(1)=n-1$ for each $s+1\le i\le t$, then $d_{G^{(i-1)}}(x_i,y_i)\le 2$ for each $s+1\le i\le t$, and thus $G^{(i)}$ is connected by Lemma \ref{lem2-1}.

This completes the proof.
\end{proof}

The above discussion concerns the Kelmans operations applied to a single graph. Similarly, the Kelmans operations can be applied to graph collections.

\begin{definition}\label{def-3}
Let $\mathbf{G}=\{G_1,\dots,G_{n-1}\}$ be a collection of graphs on the same vertex set $[n]$ and $x,y\in[n]$. The Kelmans operation on $\mathbf{G}$ from $x$ to $y$ produces a new graph collection $\mathcal{K}_{x,y}(\mathbf{G})$, which is obtained by performing the Kelmans operation from $x$ to $y$ on every $G_i\in\mathbf{G}$, i.e., $\mathcal{K}_{x,y}(\mathbf{G})=\{\mathcal{K}_{x,y}(G_1),\dots,\mathcal{K}_{x,y}(G_{n-1})\}$.
\end{definition}

\begin{definition}\label{def-4}
If $\mathcal{K}_{x,y}(\mathbf{G})=\mathbf{G}$ for every pair $(x,y)$ satisfying $x<y$, then $\mathbf{G}$ is said to be \emph{shifted}.
\end{definition}

In fact, a graph collection $\mathbf{G}$ is shifted if and only if every $G_i\in\mathbf{G}$ is shifted. Moreover, any graph collection can be transformed into a shifted graph collection via finitely many Kelmans operations.

In 2024, He, Li and Feng \cite{He} applied the Kelmans operations to graph collections and obtained the following result.

\begin{theorem}[\upshape\cite{He}]\label{lem2-3}
Let $\mathbf{G}=\{G_1, \dots, G_{n-1}\}$ be a collection of graphs on the same vertex set $[n]$, and let $x<y$ be two vertices in $[n]$. If $\mathcal{K}_{x,y}(\mathbf{G})$ contains a rainbow Hamiltonian path, then so does $\mathbf{G}$.
\end{theorem}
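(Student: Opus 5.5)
The plan is to take a rainbow Hamiltonian path $P=v_1v_2\cdots v_n$ in $\mathcal{K}_{x,y}(\mathbf{G})$, together with its injection $\phi\colon E(P)\to[n-1]$, and turn it into one in $\mathbf{G}$. Set $G_i'=\mathcal{K}_{x,y}(G_i)$. By Observation \ref{ob1}(4),(5), an edge of $P$ that avoids $\{x,y\}$, or equals $xy$, lies in $G_{\phi(e)}'$ exactly when it lies in $G_{\phi(e)}$. So the only edges of $P$ that can fail in $\mathbf{G}$ are the (at most four) edges of $P$ incident to $x$ or $y$ other than $xy$.

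Fix such an edge with colour $i=\phi(e)$, of the form $xz$ or $yz$ with $z\notin\{x,y\}$.
\begin{itemize}
\item If $yz\in G_i'$, then Observation \ref{ob1}(3) (using (2)) gives $yz,xz\in G_i$, so this edge is fine.
\item If $xz\in G_i'$ but $yz\notin G_i'$, then exactly one of $xz,yz$ lies in $G_i$. This is the only bad situation: it occurs when $xz\notin G_i$ and $yz\in G_i$.
\end{itemize}
So every failure has the form: $P$ uses $xz$ with colour $i$, and $G_i$ contains $yz$ instead.

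The key step is the exchange. Let $P'$ be the sequence obtained from $P$ by swapping the labels $x$ and $y$; it is again a Hamiltonian path on $[n]$, and we keep the same colours. We then check every edge of $P'$ in $\mathbf{G}$.
\begin{itemize}
\item An edge $xz$ of $P$ becomes $yz$ in $P'$, and an edge $yz$ becomes $xz$. The edge $xy$ stays, as do edges avoiding $x,y$.
\item Take an edge $xz$ of $P$ with colour $i$. If it was good in $G_i$ in both senses ($xz,yz\in G_i$), the image $yz$ is fine. If it was bad, then $yz\in G_i$, so the image is again fine. The remaining case is $xz\in G_i$ but $yz\notin G_i$, where the image fails.
\item Take an edge $yz$ of $P$ with colour $i$. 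Then $yz\in G_i'$, so $xz,yz\in G_i$, and the image $xz$ is fine.
\end{itemize}

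So we split into cases by the neighbours of $x$ on $P$ (at most two, say $z_1,z_2$, excluding $y$).
\begin{itemize}
\item If for every such $z_j$ we have $xz_j\in G_{\phi(xz_j)}$, then $P$ itself is rainbow in $\mathbf{G}$.
\item If for every such $z_j$ we have $yz_j\in G_{\phi(xz_j)}$, then $P'$ is rainbow in $\mathbf{G}$.
\end{itemize}

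The main obstacle is the mixed case, with two neighbours $z_1,z_2$ of $x$ on $P$ (so $x$ is interior and $y$ is not adjacent to $x$ on $P$), where $xz_1\in G_{i_1}$, $yz_1\notin G_{i_1}$, and $yz_2\in G_{i_2}$, $xz_2\notin G_{i_2}$. Here I would modify the path locally: the segment $z_1xz_2$ is replaced by $z_1 x$ together with a rerouting through $y$, using $y$'s own neighbours $w_1,w_2$ on $P$, whose edges $yw_j$ lie (by the above) in both $G_{\phi(yw_j)}$ as $yw_j$ and as $xw_j$.

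The candidate path is formed from $P$ by exchanging which of $x,y$ sits next to $z_2$ and to the $w_j$. Concretely, write $P=\cdots z_1\,x\,z_2\cdots w_1\,y\,w_2\cdots$, with $y$ possibly an endpoint. We try the path obtained by reversing the segment from $z_2$ to $w_1$: $\cdots z_1\,x\,w_1\cdots z_2\,y\,w_2\cdots$. Its colours are as follows.
\begin{itemize}
\item $xz_1$ keeps colour $i_1$ (valid).
\item $xw_1$ takes colour $\phi(yw_1)$; this is valid since $xw_1,yw_1$ both lie in that graph.
\item $yz_2$ takes colour $i_2$ (valid).
\item $yw_2$ keeps its colour.
\item All other edges are unchanged.
\end{itemize}
The colours remain distinct, so this is a rainbow Hamiltonian path in $\mathbf{G}$. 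The order where $y$ precedes $x$ along $P$ is symmetric, and the case where $y$ is an endpoint is handled by the same reversal with $w_2$ absent. I would double-check this reversal bookkeeping as the delicate point, with the rest being routine application of Observation \ref{ob1}.
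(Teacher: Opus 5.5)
The paper gives no proof of this statement; it quotes it from He, Li and Feng. Judged on its own, your argument is correct up to the mixed case. By Observation \ref{ob1}(2)--(5), the only possible failures are edges $xz$ of $P$ with $xz\notin G_{\phi(xz)}$ and $yz\in G_{\phi(xz)}$. The two pure cases are settled by $P$ and by the label-swapped path $P'$. In the configuration $P=\cdots z_1\,x\,z_2\cdots w_1\,y\,w_2\cdots$, where $z_2$ (with $yz_2\in G_{i_2}$, $xz_2\notin G_{i_2}$) lies between $x$ and $y$, your reversal of $z_2\cdots w_1$ is valid. This includes $z_2=w_1$, where it just swaps the colours of $xz_2$ and $yz_2$.

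\textbf{The gap.} The sentence ``the order where $y$ precedes $x$ along $P$ is symmetric'' is false. With $\cdots z_1\,x\,z_2\cdots$ fixed, that order means $z_1$ (with $xz_1\in G_{i_1}$, $yz_1\notin G_{i_1}$) is the neighbour between $x$ and $y$. Reading $P$ with $x$ first, this is $P=\cdots z_2\,x\,z_1\cdots w_1\,y\,w_2\cdots$. Because $x$ and $y$ play different roles, this is not a mirror image of your case. The analogous move, reversing $z_1\cdots w_1$, gives $\cdots z_2\,x\,w_1\cdots z_1\,y\,w_2\cdots$, which still uses the edge $xz_2$. Nothing puts $xz_2$ in any $G_c$. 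Indeed, $xz_2\notin E(G_c)$ for every $c$ is compatible with $xz_2\in\mathcal{K}_{x,y}(G_{i_2})$, since the latter only needs $yz_2\in G_{i_2}$. So that path can fail outright.

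\textbf{The fix.} Handle this case with a different reversal. Reverse the \emph{closed} segment from $x$ to $y$; equivalently, pass to $P'$ and then reverse the part strictly between $y$ and $x$. This gives
$\cdots z_2\,y\,w_1\cdots z_1\,x\,w_2\cdots$, coloured as follows:
\begin{itemize}
\item $yz_2$ gets $i_2$;
\item $yw_1$ gets $\phi(yw_1)$;
\item $xz_1$ gets $i_1$;
\item $xw_2$ gets $\phi(yw_2)$, which is valid because $yw_2\in\mathcal{K}_{x,y}(G_{\phi(yw_2)})$ forces $xw_2\in G_{\phi(yw_2)}$.
\end{itemize}
The reversed inner segment avoids $x$ and $y$, and the colour set is unchanged. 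This also covers $z_1=w_1$ and the case where $y$ is an endpoint ($w_2$ absent). With this case added, your proof is complete.
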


Let $\mathbf{G}=\{G_1,\dots,G_{n-1}\}$ be a collection of graphs on the same vertex set $[n]$. For any finite sequence
\begin{equation*}\label{e1}
\mathbf{G}=\mathbf{G}^{(0)},\mathbf{G}^{(1)}=\mathcal{K}_{x_1,y_1}(\mathbf{G}^{(0)}),\mathbf{G}^{(2)}=\mathcal{K}_{x_2,y_2}(\mathbf{G}^{(1)}),\dots,\mathbf{G}^{(t)}=\mathcal{K}_{x_t,y_t}(\mathbf{G}^{(t-1)}),
\end{equation*}
where $x_s,y_s\in[n]$ for each $1\le s\le t$, we write $\mathbf{G}^{(s)}=\{G_1^{(s)},G_2^{(s)},\dots,G_{n-1}^{(s)}\}$ for each integer $s$ with $0\le s\le t$, and $G_i^{(s)}$ denotes the $i$-th graph in the graph collection $\mathbf{G}^{(s)}$. In particular, $G_i^{(0)}=G_i$, and $G_i^{(s)}=\mathcal{K}_{x_s,y_s}\big(G_i^{(s-1)}\big)$ for $1\le s\le t$.

\begin{lemma}\label{lem2-4}
Let $\mathbf{G}=\{G_1, \dots, G_{n-1}\}$ be a collection of graphs on the same vertex set $[n]$, and suppose that $G_1$ is connected. Then there exists a nonnegative integer $t$ such that $\mathbf{G}^{(t)}$ is shifted and $d_{G_1^{(t)}}(1)=n-1$.
\end{lemma}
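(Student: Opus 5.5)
The plan is to run the two-phase procedure from the proof of Lemma \ref{lem2-2} on $G_1$, but to apply each Kelmans operation to the whole collection $\mathbf{G}$ as in Definition \ref{def-3}. We track only $G_1^{(s)}$; the other graphs are transformed passively. The first goal is to make $d_{G_1^{(s)}}(1)=n-1$ while keeping $G_1^{(s)}$ connected. The second goal is to keep shifting until the collection is shifted, while checking that vertex $1$ stays universal in $G_1^{(s)}$.

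\textbf{Phase 1.} Suppose $G_1^{(s)}$ is connected and $d_{G_1^{(s)}}(1)<n-1$. Then there is a vertex $v$ with $d_{G_1^{(s)}}(1,v)=2$, via some $u$ with $\{1,u\},\{u,v\}\in E(G_1^{(s)})$. We set $\mathbf{G}^{(s+1)}=\mathcal{K}_{1,u}(\mathbf{G}^{(s)})$.

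Within $G_1$ this is exactly the step in Case 2 of Lemma \ref{lem2-2}. All former neighbours of $1$ remain neighbours, and $v$ becomes a new neighbour, so $d_{G_1}(1)$ strictly increases. Since $1u$ is an edge, Lemma \ref{lem2-1} keeps $G_1^{(s+1)}$ connected.

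The crucial point is that what happens to $G_i^{(s)}$ for $i\neq1$ is irrelevant here. The operation $\mathcal{K}_{1,u}$ acts on each graph separately, and the choice of $u$ depends only on $G_1^{(s)}$. So after at most $n-1-d_{G_1}(1)$ steps we reach an index $s_0$ with $d_{G_1^{(s_0)}}(1)=n-1$.

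\textbf{Phase 2.} Starting from $\mathbf{G}^{(s_0)}$, we repeatedly apply $\mathcal{K}_{x,y}$ with $x<y$ to the whole collection whenever some $G_i$ is not fixed by it. By Observation \ref{ob1}(7), applied to the collection, this terminates in a shifted collection $\mathbf{G}^{(t)}$.

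The issue to check is termination, since Observation \ref{ob1}(7) is stated for a single graph. A standard potential such as $\sum_i\sum_{uv\in E(G_i)}(u+v)$ works. Any operation $\mathcal{K}_{x,y}$ with $x<y$ that changes some $G_i$ replaces an edge $yz$ by $xz$ and never increases any term. Hence the potential strictly decreases and the process is finite.

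It remains to show that $d_{G_1^{(s)}}(1)=n-1$ is preserved for every $s\ge s_0$. This is the step needing care; it is the same observation as in Case 1 of Lemma \ref{lem2-2}. If $x=1$, edges at $1$ are only added. If $x\ne1$, then $1\ne y$ (because $y>x\ge1$, so $y\neq 1$), and $1$ is not moved. Moreover:
\begin{itemize}
\item if $1\notin\{x,y\}$, then edges $\{1,z\}$ with $z\notin\{x,y\}$ are unchanged by Observation \ref{ob1}(5);
\item the edge $\{1,x\}$ is kept;
\item for $z=1$, which is adjacent to both $x$ and $y$, we have $1\in N(x)$, so $1\notin N_G(y)\setminus N_G[x]$ and the edge $\{1,y\}$ is not deleted.
\end{itemize}
Thus vertex $1$ remains universal in $G_1^{(t)}$, which proves the lemma. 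If $G_1$ already satisfies $d_{G_1}(1)=n-1$, Phase 1 is empty, and if the collection is already shifted we may take $t=0$ where appropriate.
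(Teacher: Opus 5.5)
Your proof is correct and takes essentially the paper's route. The paper first runs all of Lemma~\ref{lem2-2} on $G_1$, so that $G_1^{(k)}$ is already shifted and therefore fixed by every later operation, and then shifts the remaining graphs; you instead merge the two shifting phases and check directly that a universal vertex $1$ survives every $\mathcal{K}_{x,y}$ with $x<y$, which is the same observation the paper uses in Case~1 of Lemma~\ref{lem2-2}. Your potential-function argument for termination at the collection level fills in a step the paper only asserts.
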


\begin{proof}
By Lemma \ref{lem2-2}, there exists a finite sequence
\[
G_1=G_1^{(0)},G_1^{(1)}=\mathcal{K}_{x_1,y_1}(G_1^{(0)}),\dots,G_1^{(k)}=\mathcal{K}_{x_k,y_k}(G_1^{(k-1)}),
\]
where $x_i,y_i\in [n]$ and $x_i<y_i$ for each $1\le i\le k$, such that
$G_1^{(k)}$ is shifted, vertex $1$ has degree $n-1$ in $G_1^{(k)}$ and $G_1^{(i)}$ is connected for each $1\le i\le k$. Let $\mathbf{G}=\mathbf{G}^{(0)},\mathbf{G}^{(1)}=\mathcal{K}_{x_1,y_1}(\mathbf{G}^{(0)}),\dots,\mathbf{G}^{(k)}=\mathcal{K}_{x_k,y_k}(\mathbf{G}^{(k-1)})$.

If every graph in $\mathbf{G}^{(k)}$ is shifted, then by setting $t=k$, we are done.

If there exists a graph in $\mathbf{G}^{(k)}$ that is not shifted, then we continue applying the Kelmans operations with $x_i<y_i$ on $\mathbf{G}^{(k)}$ to obtain $\mathbf{G}^{(k+1)},\dots,\mathbf{G}^{(t)}$, such that every graph in $\mathbf{G}^{(t)}$ is shifted. Since $G_1^{(k)}$ is shifted and $d_{G_1^{(k)}}(1)=n-1$, by Definition \ref{def-2}, we have $G_1^{(i)}=G_1^{(k)}$ and $d_{G_1^{(i)}}(1)=n-1$ for each $k+1\le i\le t$, and thus $G_1^{(i)}$ is connected by Lemma \ref{lem2-1}.

This completes the proof of Lemma \ref{lem2-4}.
\end{proof}

\section{Proofs of Theorems \ref{thm1-3} and \ref{thm1-4}}
In this section, we show Theorems \ref{thm1-3} and \ref{thm1-4}. To this end, we first present some lemmas.

\begin{lemma}\label{lem3-2}
Let $G$ be a shifted graph on vertex set $[n]$, and let $e_j=\{j,n+1-j\}$ for $2\le j\le \left \lfloor \frac{n}{2}  \right \rfloor$.
\begin{enumerate}[label=(\roman*), font=\upshape, itemsep=2pt, align=left, leftmargin=1em]
 \item If $n\ge 4$ and $e(G)>\binom{n-2}{2}+2$, then one of the following holds:
 \begin{enumerate}[label=(\alph*), font=\upshape, itemsep=2pt, topsep=0pt]
    \item $e_j\in E(G)$ for each $2\le j\le \left \lfloor \frac{n}{2}  \right \rfloor$;
    \item $n=6$ and $G\cong K_{2}\vee 4K_1$, where $G[\{1,2\}]\cong K_2$ and $G[\{3,4,5,6\}]\cong 4K_1$;
    \item $n=8$ and $G\cong K_{3}\vee 5K_1$, where $G[\{1,2,3\}]\cong K_3$ and $G[\{4,5,6,7,8\}]\cong 5K_1$;
\end{enumerate}
 \item If $n\ge 5$ and $e(G)\ge\binom{n-1}{2}$, then $e_j\in E(G)$ for each $2\le j\le \left \lfloor \frac{n}{2}  \right \rfloor$.
\end{enumerate}
\end{lemma}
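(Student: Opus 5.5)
Suppose some $e_j=\{j,n+1-j\}$ with $2\le j\le\lfloor n/2\rfloor$ is missing from $E(G)$; I will bound $e(G)$ from above. By (6) of Observation \ref{ob1}, every pair $\{a,b\}$ with $a\ge j$ and $b\ge n+1-j$ (and $a\neq b$) is a non-edge: if $ab\in E(G)$ then, as $j\le a$ and $n+1-j\le b$, also $e_j\in E(G)$. Hence $G$ misses every edge inside $S=[n+1-j,n]$ ($|S|=j$) and every edge between $S$ and $T=[j,n-j]$ ($|T|=n-2j+1$). This gives
\[
e(G)\le \binom{n}{2}-\binom{j}{2}-j(n-2j+1)=:f(j).
\]
Equality holds exactly when $G$ contains all remaining pairs, namely $G=K_{n-j}\vee$ nothing on $S$ except edges to $[1,j-1]$. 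More precisely, the extremal graph is $K_{j-1}\vee(K_{n-2j+1}\cup jK_1)$, the shifted graph whose non-edges are precisely those pairs.

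Next I compare $f(j)$ with the two thresholds. We have $\binom{n}{2}-\binom{n-2}{2}-2=2n-5$, so in (i) the hypothesis says that the number of non-edges is less than $2n-5$, while $f$ forces at least $g(j)=\binom{j}{2}+j(n-2j+1)$ non-edges. As a function of $j$, $g$ is a concave quadratic, $g(j)=j(n+1)-\tfrac{3}{2}j^2-\tfrac{j}{2}$. On $[2,\lfloor n/2\rfloor]$ it is therefore minimized at an endpoint. At $j=2$, $g(2)=1+2(n-3)=2n-5$, which is not less than $2n-5$, so $j=2$ is impossible. At $j=\lfloor n/2\rfloor$: for even $n=2k$, $g(k)=\binom{k}{2}+k$; for odd $n=2k+1$, $g(k)=\binom{k}{2}+2k$. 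I then solve $g(\lfloor n/2\rfloor)<2n-5$ directly.

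For even $n=2k$ the condition is $k(k+1)/2<4k-5$, i.e.\ $k^2-7k+10<0$, i.e.\ $2<k<5$, so $k\in\{3,4\}$. For odd $n=2k+1$ the condition is $k(k-1)/2+2k<4k-3$, i.e.\ $k^2-5k+6<0$, which has no integer solution. Since the constraint involves only the minimum over an interval, I also need intermediate $j$; concavity guarantees $g(j)\ge\min(g(2),g(\lfloor n/2\rfloor))$. So a missing $e_j$ forces $j=\lfloor n/2\rfloor$ with $n\in\{6,8\}$, and moreover $g(j)<2n-5$ must hold for that particular $j$.

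For $n=6$, $j=3$: $g=6$, and $2n-5=7$, so $e(G)\in\{9,8\}$ is allowed by counting (since $e>8$ means $e\ge 9=15-6$). The graph is therefore forced to be exactly the complement structure with $T=\emptyset$, namely $K_2\vee 4K_1$? I need to recheck: here $S=\{4,5,6\}$ and $T=\{3\}$. Non-edges are the pairs inside $\{4,5,6\}$ together with $\{3,4\},\{3,5\},\{3,6\}$, i.e.\ every pair inside $\{3,4,5,6\}$. So $G=K_2\vee 4K_1$ with equality. The case $n=8$, $j=4$ goes the same way: $g=10$, $2n-5=11$, the non-edges are exactly the pairs inside $\{4,\dots,8\}$, and $G=K_3\vee 5K_1$.

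Part (ii) has the stronger threshold: fewer than or equal to $n-1$ non-edges. Here $g(2)=2n-5>n-1$ for $n\ge5$, and $g(\lfloor n/2\rfloor)>n-1$ should follow from checking the even and odd formulas (for example, $n=5$ gives $g=4$, and I must verify $4\le n-1=4$ does not hold strictly). Let me recheck: $e\ge\binom{n-1}{2}$ means the number of non-edges is at most $n-1$, so I need $g>n-1$. At $n=5$, $j=2$: $g=5>4$. For larger $n$ the endpoints grow quadratically. The main obstacle is simply keeping the small-case arithmetic straight and verifying that equality forces the stated isomorphism type with the stated labeling.
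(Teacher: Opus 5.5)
Your setup matches the paper's. A missing $e_j$ forces, via (6) of Observation~\ref{ob1}, every pair inside $[n+1-j,n]$ and every pair between $[j,n-j]$ and $[n+1-j,n]$ to be a non-edge. Your single application of (6) with $x_1=j$, $x_2=n+1-j$ cleanly replaces the paper's two-step argument. This gives at least $g(j)=\binom{j}{2}+j(n-2j+1)$ non-edges. The only difference is how you compare $g(j)$ with the thresholds, and in part (i) the concavity argument has a hole.

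Concavity gives $g(j)\ge\min\{g(2),g(\lfloor n/2\rfloor)\}$. That bound is useful only when both endpoints clear the threshold. In (i), for $n\in\{6,8\}$ you have $g(\lfloor n/2\rfloor)<2n-5$, so the bound says nothing about interior $j$. Hence your claim that a missing $e_j$ forces $j=\lfloor n/2\rfloor$ does not follow for $(n,j)=(8,3)$. That case needs a separate check: $g(3)=3+3\cdot 3=12\ge 11=2n-5$, i.e.\ $e(G)\le 16<17=\binom{6}{2}+2$. The check matters, because if $e_3$ could be the missing edge when $n=8$, the equality graph would be $K_2\vee(K_3\cup 3K_1)$, not the graph in (c).

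The cleanest fix is the paper's exact identity $g(j)-(2n-5)=-\tfrac{(j-2)(3j+5-2n)}{2}$. A missing $e_j$ then requires $j>2$ and $3j+5-2n>0$. Together with $n\ge 2j$ this gives $5-j>0$, so $j\in\{3,4\}$, and therefore $(n,j)\in\{(6,3),(8,4)\}$.

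For (ii) concavity is legitimate, since there you need both endpoints to exceed $n-1$. However, you left the endpoint $j=\lfloor n/2\rfloor$ unverified, and it is one line. For $n=2k$, $\tfrac{k(k+1)}{2}-(2k-1)=\tfrac{(k-1)(k-2)}{2}>0$ when $k\ge 3$. For $n=2k+1$, $\binom{k}{2}+2k-2k=\binom{k}{2}>0$ when $k\ge 2$.

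Also fix the arithmetic slips:
\begin{itemize}
\item At $n=5$, $g(2)=5$, not $4$.
\item In (i) the hypothesis allows at most $2n-6$ non-edges. For $n=6$ this means $e(G)=9$ exactly, not ``$e(G)\in\{8,9\}$''.
\item What pins down $K_2\vee 4K_1$ (resp.\ $K_3\vee 5K_1$) with the stated labeling is the equality $g(\lfloor n/2\rfloor)=2n-6$, namely $6$ for $n=6$ and $10$ for $n=8$. This forces the non-edges to be exactly all pairs inside $\{3,\dots,6\}$ (resp.\ $\{4,\dots,8\}$).
\end{itemize}
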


\begin{proof}
Suppose that there exists some $2\le j\le \left \lfloor \frac{n}{2}  \right \rfloor$ such that $e_j\notin E(G)$. We partition $V(G)$ into three subsets: $V_1=[j-1]$, $V_2=[j,n-j]$, and $V_3=[n+1-j,n]$.

First, we show $E_{G}[V_2,V_3]=\emptyset$. If not, we take $\{u,v\}\in E(G)$ for
$u\in V_2$ and $v\in V_3$. By (6) of Observation \ref{ob1} and setting $x_1=j$, $y_1=u$, $x_2=n+1-j$, $y_2=v$, we have $e_j=\{j,n+1-j\}\in E(G)$ as $\{u,v\}\in E(G)$, a contradiction.

Next, we show that $V_3$ is an independent set in $G$. Otherwise, there exist $u,v\in V_3$ such that $\{u,v\}\in E(G)$. By (6) of Observation \ref{ob1} and setting $x_1=j$, $y_1=u$, $x_2=y_2=v$, we have $\{j,v\}\in E(G)$ as $\{u,v\}\in E(G)$, which implies $E_{G}[V_2,V_3]\neq\emptyset$, a contradiction.

Therefore, we have
\begin{align}
e(G)&\le \binom{n}{2}-|V_2||V_3|-\binom{|V_3|}{2} \notag\\
&=\binom{n}{2}-(n+1-2j)j-\binom{j}{2} \notag\\
&=\binom{n-2}{2}+2+\frac{(j-2)(3j+5-2n)}{2} \label{e2}\\
&=\binom{n-1}{2}-\frac{(j-1)(2n-3j-2)}{2}. \label{e3}
\end{align}

We first show that (i) holds. Since (a) does not hold by assumption, it suffices to show that (b) or (c) holds.
Combining \eqref{e2} with $e(G)>\binom{n-2}{2}+2$, we obtain $\frac{(j-2)(3j+5-2n)}{2}>0$. Clearly, $j\neq2$. If $j\ge 5$, then $3j+5-2n\le 5-j\le 0$ as $n\ge 2j$, and thus $\frac{(j-2)(3j+5-2n)}{2}\le 0$, a contradiction.
Therefore, $(n,j)=(6,3)$ or $(n,j)=(8,4)$ since $\frac{(j-2)(3j+5-2n)}{2}>0$ and $j\le \left \lfloor \frac{n}{2}  \right \rfloor$.

Furthermore, if $(n,j)=(6,3)$ or $(n,j)=(8,4)$, by \eqref{e2}, we have  $e(G)=\binom{n}{2}-|V_2||V_3|-\binom{|V_3|}{2}$ since $e(G)>\binom{n-2}{2}+2$, which implies $G\cong K_{2}\vee 4K_1$ with $G[\{1,2\}]\cong K_2$ and $G[\{3,4,5,6\}]\cong 4K_1$, or $G\cong K_{3}\vee 5K_1$ with $G[\{1,2,3\}]\cong K_3$ and $G[\{4,5,6,7,8\}]\cong 5K_1$. Therefore, (b) or (c) holds, and thus (i) holds.

Now we show that (ii) holds. If $j=2$, then $2n-3j-2>0$ as $n\ge 5$; if $j\ge3$, then $2n-3j-2\ge j-2>0$ as $n\ge 2j$. Thus $e(G)<\binom{n-1}{2}$ by \eqref{e3}, contradicting $e(G)\ge\binom{n-1}{2}$. Therefore, $e_j\in E(G)$ for each $2\le j\le \left \lfloor \frac{n}{2}  \right \rfloor$, (ii) holds.

This completes the proof.
\end{proof}

\begin{lemma}\label{lem3-3}
Let $r\ge 2$, $n=2r+2$, and let $\mathbf{G}=\{G_1, \dots, G_{n-1}\}$ be a collection of graphs on the same vertex set $[n]$ that contains no rainbow Hamiltonian path. If $\mathcal{K}_{x,y}(G_1)=\mathcal{K}_{x,y}(G_2)=\dots=\mathcal{K}_{x,y}(G_{n-1})\cong K_r\vee (n-r)K_1$ for two distinct vertices $x,y\in [n]$, then $G_1=G_2=\dots=G_{n-1}\cong K_r\vee (n-r)K_1$.
\end{lemma}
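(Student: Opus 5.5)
The plan is to determine exactly what each $G_i$ can look like, given that $\mathcal{K}_{x,y}(G_i)=H$ for every $i$, where $H\cong K_r\vee (n-r)K_1$ is one fixed graph on $[n]$. Write $A$ for the clique part of $H$ (so $|A|=r$) and $B$ for the independent part (so $|B|=r+2$).

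\textbf{Step 1: where the edges of $G_i$ can differ from $H$.} By (4) and (5) of Observation~\ref{ob1}, $G_i$ and $H$ agree on the edge $xy$ and on all edges avoiding $\{x,y\}$. By (3) of Observation~\ref{ob1}, they can differ only at vertices $z$ with $xz\in E(H)$ and $yz\notin E(H)$. At such a $z$, $G_i$ contains exactly one of $xz,yz$. I would then split into cases according to where $x$ and $y$ lie.
\begin{itemize}
\item If $x,y\in A$, or $x,y\in B$, then $N_H(x)\setminus\{y\}=N_H(y)\setminus\{x\}$. Hence there is no such $z$, and $G_i=H$ for all $i$.
\item The case $x\in B$, $y\in A$ is impossible. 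By (2) of Observation~\ref{ob1} we would need $N_H(y)\setminus\{x\}\subseteq N_H(x)$, but $y$ has neighbours in $B\setminus\{x\}$ while $x$ has none.
\item This leaves $x\in A$, $y\in B$. The relevant vertices $z$ are exactly those of $B\setminus\{y\}$. So $G_i$ arises from $H$ by choosing a set $S_i\subseteq B\setminus\{y\}$ and replacing $xz$ by $yz$ for each $z\in S_i$.
\end{itemize}
It therefore suffices to show that if some $S_i\neq\emptyset$, say $z\in S_1$, then $\mathbf{G}$ has a rainbow Hamiltonian path. This contradicts the hypothesis, so all $S_i=\emptyset$ and $G_1=\dots=G_{n-1}=H$.

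\textbf{Step 2: building the path.} In every $G_i$ the following edges are present: all edges inside $A$ (including $xy$? no, $x\in A$ and $y\in B$, so list it separately), all edges between $A\setminus\{x\}$ and $B$, and the edge $xy$. These "universal" edges can be coloured freely. The extra edge $yz\in E(G_1)$ joins two vertices of $B$. Contracting it leaves $r+1$ "$B$-units" against $r$ vertices of $A$, which is exactly the count needed for an alternating $B$--$A$--$B$ path. The only remaining difficulty is that edges $xb$ with $b\in B$ are not universal.
\begin{itemize}
\item \emph{Case (a):} there are $i\ge 2$ and $b\in B\setminus\{y,z\}$ with $xb\in E(G_i)$. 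Take the path $b\,x\,y\,z\,a_2\,b_2\,a_3\cdots a_r\,b_r$, where $A\setminus\{x\}=\{a_2,\dots,a_r\}$ and $B\setminus\{y,z,b\}=\{b_2,\dots,b_r\}$. Colour $bx$ by $i$ and $yz$ by $1$. All other edges are universal, so they can be given the remaining $n-3$ colours injectively.
\item \emph{Case (b):} no such $i,b$ exist. Then for every $i\ge2$ we have $S_i=B\setminus\{y\}$, so $y$ is adjacent to all of $B\setminus\{y\}$ in $G_2,\dots,G_{n-1}$. Use $yz$ from $G_1$ and $yb$ from $G_2$, which merges three $B$-vertices into one unit. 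Then place $x$ strictly between two vertices of $A\setminus\{x\}$, so that no edge $xb$ is needed. For $r\ge3$ this gives the path $b\,y\,z\,a_2\,x\,a_3\,b_3\,a_4\cdots$. The unit count is $r$ $B$-units against $r-1$ $A$-blocks (with $x$ absorbed into the block $a_2xa_3$), so the alternation closes.
\end{itemize}

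\textbf{Main obstacle.} I expect the bookkeeping in Case (b) to be the hardest part, together with the smallest case $r=2$ (that is, $n=6$), where $A=\{x,a\}$ and $x$ cannot be buried between two other $A$-vertices. There I would do a direct hand analysis on the six vertices $x,a,y,z,b,b'$. The idea is to exploit that in Case (b) the graphs $G_2,\dots,G_5$ all contain $yb$ and $yb'$, and to choose which single edge of $G_1$ to use: $G_1$ also contains either $xb$ or $yb$, and either $xb'$ or $yb'$. If that still fails, I would use the symmetric roles of $b$ and $b'$ to find a path such as $z\,y\,b\,a\,x\,\cdots$ by trying all possible placements of $x$.
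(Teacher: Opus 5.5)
There is a genuine gap: the target of your reduction is false. You try to show that every $S_i$ is empty, i.e.\ $G_i=H$ for all $i$. The lemma only claims that the $G_i$ are all equal and isomorphic to $K_r\vee(n-r)K_1$, and the other possibility really occurs. Take $S_i=B\setminus\{y\}$ for every $i$, so each $G_i$ is the graph with clique $(A\setminus\{x\})\cup\{y\}$ and independent set $(B\setminus\{y\})\cup\{x\}$. Then $\mathcal{K}_{x,y}(G_i)=H$. Yet all the $G_i$ are this one graph, which has an independent set of size $r+2>\lceil n/2\rceil=r+1$. A Hamiltonian path meets an independent set in at most $\lceil n/2\rceil$ vertices, so $\mathbf{G}$ has no rainbow Hamiltonian path.

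This collection has $S_1\neq\emptyset$ and lies in your Case (b), so Case (b) cannot end in a contradiction. Your count there is also off. With $x$ buried in $a_2xa_3$, the $A$-side forms only $r-2$ blocks ($a_2xa_3$ and $a_4,\dots,a_r$). Against these there are $r$ $B$-units ($\{b,y,z\}$ and the $r-1$ other vertices of $B$). So the path $b\,y\,z\,a_2\,x\,a_3\,b_3\,a_4\cdots$ always misses a vertex of $B$, and no other placement of $x$, nor a hand analysis for $r=2$, can rescue the swapped configuration. Note also that the Case (b) hypothesis only gives $S_i\supseteq B\setminus\{y,z\}$ for $i\ge 2$, not $S_i=B\setminus\{y\}$.

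What you should prove instead is uniformity: either $S_i=\emptyset$ for all $i$, or $S_i=B\setminus\{y\}$ for all $i$. If neither holds, you can find $p\neq q$ and distinct $v_a,v_b\in B\setminus\{y\}$ with $xv_a\in E(G_p)$ and $yv_b\in E(G_q)$. To see this, restrict to two graphs and two vertices. There, the only non-uniform pattern avoiding such a pair is the crossed one: $xv_a,yv_b\in E(G_p)$ and $yv_a,xv_b\in E(G_q)$. A third graph or a third vertex of $B\setminus\{y\}$ breaks it, and here there are $n-1\ge 5$ graphs and $r+1\ge 3$ such vertices. Your Case (a) path $v_a\,x\,y\,v_b\,a_2\,b_2\cdots a_r\,b_r$ is then rainbow, a contradiction. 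In the uniform case $S_i=B\setminus\{y\}$ you get $G_1=\dots=G_{n-1}\cong K_r\vee(n-r)K_1$ with the roles of $x$ and $y$ exchanged, which is exactly what the lemma asserts. This is the paper's argument, and your Step 1 and Case (a) already match it.
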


\begin{proof}
Without loss of generality, let $A\subseteq [n]$ be the set of $r$ vertices of degree $n-1$ in $\mathcal{K}_{x,y}(G_i)$ for each $i\in[n-1]$, and let $B=[n]\setminus A$. Then $|B|=r+2$.

First, we show $x,y\in A$, or $x,y\in B$, or $x\in A$ with $y\in B$. Otherwise, $x\in B$ and $y\in A$, thus for each $i\in[n-1]$, $yz\in E(\mathcal{K}_{x,y}(G_i))$ and $xz\notin E(\mathcal{K}_{x,y}(G_i))$, where $z\in B\setminus\{x\}$. This contradicts (2) of Observation \ref{ob1}.

Now we proceed by distinguishing the following two cases.

\vspace{6pt}
$\mathbf{Case~1.}$ $x,y\in A$ or $x,y\in B$.
\vspace{6pt}

By (3)-(5) of Observation \ref{ob1}, we readily obtain $G_i=\mathcal{K}_{x,y}(G_i)$ for each $i\in[n-1]$, and the conclusion follows.

\vspace{6pt}
$\mathbf{Case~2.}$ $x\in A$ and $y\in B$.
\vspace{6pt}

We set $A'=A\setminus\{x\}=\{u_1,u_2,\dots,u_{r-1}\}$ and $B'=B\setminus\{y\}=\{v_1,v_2,\dots,v_{r+1}\}$.
By (3)-(5) of Observation \ref{ob1}, for each $i\in[n-1]$, we have $xy\in E(G_i)$, $d_{G_i}(u)=n-1$ for each $u\in A'$, $B'$ is an independent set in $G_i$, and exactly one of $xv$ and $yv$ belongs to $E(G_i)$ for each $v\in B'$.

Now we show that $xv\in E(G_i)$ for each $v\in B'$ and each $i\in [n-1]$, or $yv\in E(G_i)$ for each $v\in B'$ and each $i\in [n-1]$. If not, there exist distinct $p,q\in[n-1]$ and distinct $v_a,v_b\in B'$ such that $xv_a\in E(G_p)$ and $yv_b\in E(G_q)$. For convenience, let $B'\setminus\{v_a,v_b\}=\{v'_1,v'_2,\dots,v'_{r-1}\}$. Recall that for each $i\in [n-1]$, $xy\in E(G_i)$ and $d_{G_i}(u)=n-1$ for each $u\in A'$.
Then $P=v_axyv_bu_1v'_1u_2v'_2\dots u_{r-1}v'_{r-1}$ is a rainbow Hamiltonian path in $\mathbf{G}$, where $xv_a\in E(G_p)$, $yv_b\in E(G_q)$ and $E(P)\setminus\{xv_a,yv_b\}\subseteq E(G_i)$ for each $i\in[n-1]$, a contradiction.

If $xv\in E(G_i)$ for each $v\in B'$ and each $i\in [n-1]$, then $G_i=\mathcal{K}_{x,y}(G_i)$ for each $i\in [n-1]$. 
If $yv\in E(G_i)$ for each $v\in B'$ and each $i\in [n-1]$, then $G_i\cong K_r\vee (n-r)K_1$ with $G_i[A'\cup\{y\}]\cong K_r$ and $G_i[B'\cup\{x\}]\cong (n-r)K_1$ for each $i\in [n-1]$.
Therefore, $G_1=G_2=\dots=G_{n-1}\cong K_r\vee (n-r)K_1$.

This completes the proof of Lemma \ref{lem3-3} by Cases 1 and 2.
\end{proof}

The proofs of the following two lemmas are similar to that of Lemma \ref{lem3-3}. We omit them here and defer the details to Appendix A.

\begin{lemma}\label{lem3-4}
Let $n\ge 4$ and $\mathbf{G}=\{G_1, \dots, G_{n-1}\}$ be a collection of graphs on the same vertex set $[n]$ that contains no rainbow Hamiltonian path. If $\mathcal{K}_{x,y}(G_1)=\mathcal{K}_{x,y}(G_2)=\dots=\mathcal{K}_{x,y}(G_{n-1})\cong K_{n-1}\cup K_1$ for two distinct vertices $x,y\in [n]$, then $G_1=G_2=\dots=G_{n-1}\cong K_{n-1}\cup K_1$.
\end{lemma}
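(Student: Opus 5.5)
We follow the template of the proof of Lemma \ref{lem3-3}. Let $w$ be the isolated vertex of $\mathcal{K}_{x,y}(G_i)$ for each $i\in[n-1]$, and let $A=[n]\setminus\{w\}$, so that $\mathcal{K}_{x,y}(G_i)[A]\cong K_{n-1}$. Our goal is to recover $G_i$ from its shifted image using (2)--(5) of Observation \ref{ob1}. This yields a small number of possible shapes for each $G_i$. In each shape, either $G_i\cong K_{n-1}\cup K_1$ is forced, or we exhibit a rainbow Hamiltonian path, contradicting the hypothesis.

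First we locate $x$ and $y$. We cannot have $x=w$, because then for every $z\in A\setminus\{y\}$ we would have $yz\in E(\mathcal{K}_{x,y}(G_i))$ but $xz\notin E(\mathcal{K}_{x,y}(G_i))$, contradicting (2) of Observation \ref{ob1}; here $A\setminus\{y\}\neq\emptyset$ since $n\ge 4$. So either $x,y\in A$, or $x\in A$ and $y=w$.

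\textbf{Case 1.} Suppose $x,y\in A$. For every $z\ne x,y$, the edges $xz$ and $yz$ are either both present in $\mathcal{K}_{x,y}(G_i)$ (when $z\in A$) or both absent (when $z=w$). By (3) of Observation \ref{ob1} the same holds in $G_i$. By (4) and (5), all other adjacencies are preserved. Hence $G_i=\mathcal{K}_{x,y}(G_i)$ for every $i$, and we are done.

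\textbf{Case 2.} Suppose $x\in A$ and $y=w$. Set $A'=A\setminus\{x\}$, so $|A'|=n-2\ge 2$. Observation \ref{ob1} gives the following for each $i$:
\begin{enumerate}[label=(\arabic*), font=\upshape, itemsep=2pt, align=left, leftmargin=1em]
 \item $G_i[A']$ is complete, by (5);
 \item $xy\notin E(G_i)$, by (4);
 \item for each $v\in A'$, exactly one of $xv$ and $yv$ lies in $E(G_i)$, by (3).
\end{enumerate}
If $xv\in E(G_i)$ for all $v$ and all $i$, then $G_i=\mathcal{K}_{x,y}(G_i)$. If $yv\in E(G_i)$ for all $v$ and all $i$, then $G_i\cong K_{n-1}\cup K_1$ on $A'\cup\{y\}$ with $x$ isolated. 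In both sub-cases all $G_i$ are equal, which is the conclusion.

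The remaining situation is the main step. Suppose there exist $p,q\in[n-1]$ and $a,b\in A'$ with $xa\in E(G_p)$ and $yb\in E(G_q)$. We want a rainbow Hamiltonian path of the form $x\,a\,\cdots\,b\,y$, whose interior is a Hamiltonian path of $A'$ from $a$ to $b$. All interior edges lie in every $G_i$, so they can be assigned the $n-3$ colours other than $p$ and $q$, which is possible since $n-1$ colours are available.
\begin{enumerate}[label=(\arabic*), font=\upshape, itemsep=2pt, align=left, leftmargin=1em]
 \item If we can take $a\ne b$ and $p\neq q$, the construction works directly.
 \item If $a\ne b$ can be chosen but only with $p=q$, then $xa,yb\in E(G_p)$. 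Here we pick some other index $p'\ne p$. Since $a$ is adjacent in $G_{p'}$ to exactly one of $x,y$, we switch to using $G_{p'}$ for the appropriate edge ($xa$ or $ya$) and pair it with an edge of $G_p$ at the other endpoint. This needs $n-1\ge 2$ and a short subcase analysis.
 \item If every witness has $a=b$, then each $v\in A'$ is joined to $x$ in some graphs and to $y$ in others, so two distinct vertices give the needed pair with distinct indices.
\end{enumerate}
The bookkeeping in this last step is the part to check carefully. A cleaner route is to apply Hall's theorem to the two end edges. Every vertex $v\in A'$ offers, in every graph $G_i$, exactly one of the two edges $xv$ and $yv$. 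With $|A'|\ge 2$ and $n-1\ge 3$ graphs, we can find two distinct vertices $a\neq b$ and two distinct indices $p\neq q$ with $xa\in E(G_p)$ and $yb\in E(G_q)$, unless all graphs are uniform in the same direction, which is exactly the two sub-cases already settled. This contradiction to the absence of a rainbow Hamiltonian path completes the proof.
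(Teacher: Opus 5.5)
Your proposal is correct and follows the paper's proof in Appendix A.1 step for step. It excludes $x=w$ via (2) of Observation~\ref{ob1} and gets $G_i=\mathcal{K}_{x,y}(G_i)$ when $w\notin\{x,y\}$; for $y=w$ it reduces to the two uniform configurations or else to the rainbow path $x\,a\cdots b\,y$ with $a\neq b$, $p\neq q$.

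Like the paper, you only assert that such $a,b,p,q$ exist in the non-uniform case, and your sub-case sketch is off as written. Sub-case (2) genuinely needs a third graph: with $xa,yb\in E(G_p)$, $ya,xb\in E(G_{p'})$ and $A'=\{a,b\}$, two graphs give no good pair. Sub-case (3) is vacuous since $|A'|\ge 2$. A short argument settles the step. If some $v\in A'$ has $xv\in E(G_p)$ and $yv\in E(G_q)$, pick $r\notin\{p,q\}$ and $u\in A'\setminus\{v\}$, then pair $xu\in E(G_r)$ with $yv\in E(G_q)$, or $yu\in E(G_r)$ with $xv\in E(G_p)$. Otherwise every $v\in A'$ is joined to the same end in all $G_i$, and non-uniformity gives $a\neq b$ with $xa$ and $yb$ in every $G_i$.
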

\begin{proof}
See Appendix A.1.
\end{proof}

\begin{lemma}\label{lem3-5}
Let $\mathbf{G}=\{G_1, G_2, G_3\}$ be a collection of graphs on the same vertex set $[4]$ that contains no rainbow Hamiltonian path. If $\mathcal{K}_{x,y}(G_1)=\mathcal{K}_{x,y}(G_2)=\mathcal{K}_{x,y}(G_3)\cong K_{1,3}$ for two distinct vertices $x,y\in [4]$, then $G_1=G_2=G_3\cong K_{1,3}$.
\end{lemma}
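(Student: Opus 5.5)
Let $c\in[4]$ be the center of the star $H:=\mathcal{K}_{x,y}(G_i)\cong K_{1,3}$ (the same for all $i$), and let $L=[4]\setminus\{c\}$ be the set of its three leaves, which is independent in $H$. As in the proof of Lemma \ref{lem3-3}, we first use (2) of Observation \ref{ob1} to rule out $x\in L$, $y=c$. In that case, for any $z\in L\setminus\{x\}$ we would have $yz\in E(H)$ and $xz\notin E(H)$, which is impossible. So only three placements remain: $x,y\in L$; $x=c$ with $y\in L$; and $x,y$ both equal to $c$, which cannot happen since $x\ne y$.

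\emph{Case 1: $x,y\in L$.} Let $z$ range over the two vertices outside $\{x,y\}$; these are $c$ and the third leaf $w$. We have $xc,yc\in E(H)$, so by (3) of Observation \ref{ob1} both edges lie in $G_i$. We also have $xw,yw\notin E(H)$, so neither edge lies in $G_i$. Items (4) and (5) give $xy\notin E(G_i)$ and $E(G_i)\cap\binom{\{c,w\}}{2}=E(H)\cap\binom{\{c,w\}}{2}$. Hence $G_i=H$ for every $i$, and the conclusion follows.

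\emph{Case 2: $x=c$, $y\in L$.} Write $L\setminus\{y\}=\{v_1,v_2\}$. By (3)--(5) of Observation \ref{ob1}, every $G_i$ satisfies three facts: $xy\in E(G_i)$; $v_1v_2\notin E(G_i)$; and for each $j\in\{1,2\}$ exactly one of $xv_j$ and $yv_j$ lies in $G_i$. So each $G_i$ is determined by a choice function $\sigma_i\colon\{v_1,v_2\}\to\{x,y\}$, and $G_i$ is the graph with edges $xy$, $\sigma_i(v_1)v_1$ and $\sigma_i(v_2)v_2$.

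If some $G_p$ contains $xv_a$ and some $G_q$ with $q\ne p$ contains $yv_b$ with $a\ne b$, then $v_a\,x\,y\,v_b$ is a rainbow Hamiltonian path, using $xv_a$ from $G_p$, $yv_b$ from $G_q$, and $xy$ from the remaining graph. This is a contradiction, since $xy$ lies in all three graphs.

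I expect the main obstacle to be showing that this single exclusion forces all $\sigma_i$ to equal the same constant function. For $n=4$ there is no room for the filler vertices of Lemma \ref{lem3-3}, so the various mixed configurations must each be handled directly. Two further path types help:
\begin{itemize}
\item If a single graph $G_p$ has $\sigma_p(v_1)=x$ and $\sigma_p(v_2)=y$, then $G_p$ alone is the path $v_1xyv_2$. But we need three distinct graphs, so instead we pair edges across graphs: take $xv_1$ from $G_p$ and $yv_2$ from some $G_q$ with $q\ne p$. This works whenever another graph also contains $yv_2$ or $xv_1$, for example $v_1xyv_2$ with $xv_1\in G_q$ and $yv_2\in G_p$.
\item Paths of the form $v_a\,x\,v_b$ cannot reach $y$, so the only Hamiltonian paths through the available edges are $v_a x y v_b$ and $v_a x v_b$-type paths. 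Since $v_1v_2$ is never an edge, the latter family is covered by $xv_1,xv_2$ plus $xy$; but a path cannot use $x$ three times, so only $v_1xyv_2$, $v_2xyv_1$, $xv_1$-$yv_2$ combinations matter.
\end{itemize}
With these, a short check over the finitely many assignments of $(\sigma_1,\sigma_2,\sigma_3)$ shows the following. Whenever the edge $xv_a$ occurs in some graph and $yv_b$ ($b\ne a$) occurs in a different graph, or both occur in one graph while another graph repeats one of them, a rainbow path exists. The cases left over are exactly those with all $\sigma_i\equiv x$ or all $\sigma_i\equiv y$; this is the hardest bookkeeping step. In the first case $G_i=H$. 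In the second case $G_i$ is the star centered at $y$ with leaves $x,v_1,v_2$, so $G_1=G_2=G_3\cong K_{1,3}$, as claimed.
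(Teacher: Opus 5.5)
Your proposal is correct and is essentially the paper's proof: you exclude $y$ being the center via (2) of Observation~\ref{ob1}, get $G_i=\mathcal{K}_{x,y}(G_i)$ when neither $x$ nor $y$ is the center, and, when $x$ is the center, encode each $G_i$ by $\sigma_i$ and use the rainbow path $v_axyv_b$ to force all $\sigma_i$ to be the same constant. The step you defer as ``bookkeeping'' (which the paper also merely asserts) takes one line---if some $G_p$ contains $xv_1$ and $yv_2$, either another graph contains $xv_1$ or $yv_2$, or the other two both contain $xv_2$ and $yv_1$ and can be paired with each other, while if all $\sigma_i$ are constant but not all equal you pair an $x$-graph with a $y$-graph---and your second bullet's list of Hamiltonian paths is inaccurate (it misses, e.g., $xv_1yv_2$), which is harmless since you only ever need to exhibit paths.
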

\begin{proof}
See Appendix A.2.
\end{proof}

Now we give the proofs of Theorems \ref{thm1-3} and \ref{thm1-4}.
\begin{proof}[\noindent\textbf{Proof of Theorem \ref{thm1-3}}]
If $n=3$, then $e(G_1)=e(G_2)=3$ since $e(G_i)>\binom{n-2}{2}+2$ for $i\in[2]$, and thus $G_1=G_2\cong K_3$, which implies that (i) holds. Therefore, we only consider $n\ge 4$ in the rest of the proof. We assume that (i) does not hold, now we show that either (ii) or (iii) holds. Without loss of generality, we assume that $G_1$ is connected.

By Lemma \ref{lem2-4} and its proof, there exists a finite sequence
\begin{equation}\label{e4}
\mathbf{G}=\mathbf{G}^{(0)},\mathbf{G}^{(1)}=\mathcal{K}_{x_1,y_1}(\mathbf{G}^{(0)}),\dots,\mathbf{G}^{(t)}=\mathcal{K}_{x_t,y_t}(\mathbf{G}^{(t-1)}),
\end{equation}
where $x_s,y_s\in [n]$ and $x_s<y_s$ for each $1\le s\le t$, such that $\mathbf{G}^{(t)}=\{G_1^{(t)},G_2^{(t)},\dots,G_{n-1}^{(t)}\}$ is shifted and $d_{G_1^{(t)}}(1)=n-1$, which implies that $G_i^{(t)}$ is shifted for each $i\in[n-1]$.
It follows from (1) of Observation \ref{ob1} that $e(G_i^{(t)})=e(G_i)>\binom{n-2}{2}+2$ for each $i\in [n-1]$.
Since $\mathbf{G}$ contains no rainbow Hamiltonian path, by Theorem \ref{lem2-3}, neither does $\mathbf{G}^{(s)}$ for each $1\le s\le t$.

Let $e_1=\{1,n\}$, $e_j=\{j,n+1-j\}$ for $2\le j\le \left \lfloor \frac{n}{2}  \right \rfloor$ and $e'_j=\{j,n-j\}$ for $1\le j\le \left \lfloor \frac{n-1}{2}  \right \rfloor$. Then $e_1\in E(G_1^{(t)})$ as $d_{G_1^{(t)}}(1)=n-1$.

\begin{claimnum}\label{claim1}
$n\in\{6,8\}$.
\end{claimnum}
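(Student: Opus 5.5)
We argue by contradiction: assume $n\notin\{6,8\}$ (and $n\ge 4$) and construct a rainbow Hamiltonian path in the shifted collection $\mathbf{G}^{(t)}$. By Theorem \ref{lem2-3}, such a path would also give one in $\mathbf{G}$, contradicting our assumption.

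The candidate path is the zigzag
\[
P\colon\ 1,\ n,\ 2,\ n-1,\ 3,\ n-2,\ \dots
\]
which alternates between the smallest and largest unused labels. Its edges are exactly $e_1=\{1,n\}$, the $e_j=\{j,n+1-j\}$ for $2\le j\le \lfloor n/2\rfloor$, and the $e'_j=\{j,n-j\}$ for $1\le j\le \lfloor (n-1)/2\rfloor$. There are $n-1$ of them in total, matching the $n-1$ graphs.

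We then check which graphs contain which edges.
\begin{itemize}
\item The edge $e_1$ lies in $G_1^{(t)}$, since $d_{G_1^{(t)}}(1)=n-1$.
\item Since $n\notin\{6,8\}$, cases (b) and (c) of Lemma \ref{lem3-2}(i) cannot occur. Hence every $e_j$ with $2\le j\le\lfloor n/2\rfloor$ lies in every $G_i^{(t)}$.
\item For $e'_j=\{j,n-j\}$, take $j\ge 2$. Then $j\le j$ and $n-j\le n+1-j$, so $e_j\in E(G_i^{(t)})$ together with (6) of Observation \ref{ob1} gives $e'_j\in E(G_i^{(t)})$.
\item For $e'_1=\{1,n-1\}$, compare with $e_2=\{2,n-1\}$ when $n\ge 4$, using $1\le 2$. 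Thus $e'_1$ lies in every $G_i^{(t)}$ as well.
\end{itemize}
So every edge of $P$ other than $e_1$ belongs to all $n-1$ graphs. Assigning $e_1$ to $G_1^{(t)}$ and the remaining $n-2$ edges bijectively to $G_2^{(t)},\dots,G_{n-1}^{(t)}$ makes $P$ rainbow.

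We must also confirm that applying (6) of Observation \ref{ob1} is legitimate. It needs $x_1\neq x_2$, which holds because the two endpoints of $e'_j$ are distinct vertices. It also needs the relevant $e_j$ to exist, which requires $2\le\lfloor n/2\rfloor$; this holds since $n\ge 4$.

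This gives a rainbow Hamiltonian path in $\mathbf{G}^{(t)}$, hence in $\mathbf{G}$, a contradiction. Therefore $n\in\{6,8\}$.

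The main point requiring care is the edge bookkeeping: that the zigzag edge list is exactly $\{e_1\}\cup\{e_j\}\cup\{e'_j\}$ with the stated index ranges, for both parities of $n$. For small $n$ such as $n=4$ or $5$, the checks above still apply because they only use $e_2$.
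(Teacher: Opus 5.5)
Your strategy is the paper's: apply Lemma \ref{lem3-2}(i)(a) to every $G_i^{(t)}$, derive each $e'_j$ from $e_2$ or $e_j$ via (6) of Observation \ref{ob1}, place $e_1$ in $G_1^{(t)}$, and assign the remaining edges arbitrarily. All of your membership checks are correct, as are the $x_1\neq x_2$ condition and the count of $n-1$ edges.

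However, the vertex sequence you wrote is not the path whose edges you checked. The sequence $1,n,2,n-1,3,n-2,\dots$ has consecutive edges $\{1,n\},\{2,n\},\{2,n-1\},\{3,n-1\},\{3,n-2\},\dots$. These are the $e_j$ interleaved with $\{j+1,n+1-j\}$, not with $e'_j=\{j,n-j\}$. The edges $\{j+1,n+1-j\}$ lie \emph{above} $e_j$ in the shifted order, so Observation \ref{ob1}(6) says nothing about them, and that path can genuinely fail. Here is an instance. Take $n=5$, let $G_1^{(t)}$ be $K_4$ on $[4]$ plus the edge $\{1,5\}$, and let $G_i^{(t)}=K_4\cup K_1$ (with $5$ isolated) for $i\ge 2$. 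All of these are shifted with more than $\binom{3}{2}+2$ edges, and $d_{G_1^{(t)}}(1)=4$. Yet $\{2,5\}$ lies in no graph, so $1,5,2,4,3$ is not even a path of the collection. The underlying point is that your sequence makes $n$ an interior vertex, while $e_1$ is the only edge at $n$ you can guarantee.

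The fix is to run the zigzag starting from the largest label: $n,1,n-1,2,n-2,3,\dots$. Its consecutive edges are exactly $e_1,e'_1,e_2,e'_2,\dots$, ending with $e'_{(n-1)/2}$ for odd $n$ and with $e_{n/2}$ for even $n$; this is the path $e_1e'_1e_2e'_2\cdots$ used in the paper. With that correction, your argument coincides with the paper's proof, including for $n=4,5$, where the path is $4,1,3,2$ and $5,1,4,2,3$ respectively.
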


\begin{proof}
If $n\notin\{6,8\}$, then $e_j\in E(G_i^{(t)})$ for each $2\le j\le \left \lfloor \frac{n}{2}  \right \rfloor$ and each $i\in [n-1]$ by (i) of Lemma \ref{lem3-2}.
For each $i\in [n-1]$, since $e_2=\{2,n-1\}\in E(G_i^{(t)})$, by (6) of Observation \ref{ob1}, we have $e'_1=\{1,n-1\}\in E(G_i^{(t)})$ by setting $x_1=1$, $y_1=2$ and $x_2=y_2=n-1$. For each $i\in [n-1]$ and each $2\le j\le \left \lfloor \frac{n-1}{2}  \right \rfloor$, since $e_j=\{j,n+1-j\}\in E(G_i^{(t)})$, we have $e'_j=\{j,n-j\}\in E(G_i^{(t)})$ by (6) of Observation \ref{ob1} and setting $x_1=y_1=j$, $x_2=n-j$ and $y_2=n+1-j$.


Therefore, for odd $n$, $e_1e'_1e_2e'_2\cdots e_{\frac{n-1}{2}}e'_{\frac{n-1}{2}}$ is a rainbow Hamiltonian path in $\mathbf{G}^{(t)}$; for even $n$, $e_1e'_1e_2e'_2\cdots e_{\frac{n}{2}-1}e'_{\frac{n}{2}-1}e_{\frac{n}{2}}$ is a rainbow Hamiltonian path in $\mathbf{G}^{(t)}$. This contradicts that $\mathbf{G}^{(t)}$ contains no rainbow Hamiltonian path, and thus $n\in\{6,8\}$.
\end{proof}

By Claim \ref{claim1}, we let $n=2r+2$, where $r\in\{2,3\}$. For each $i\in [n-1]$, we have $e_j\in E(G_i^{(t)})$ for each $2\le j\le r$ by (i) of Lemma \ref{lem3-2}, and thus $e'_j\in E(G_i^{(t)})$ for each $1\le j\le r$ by (6) of Observation \ref{ob1}. Now we only need to check whether $e_{r+1}=\{r+1,n-r\}\in E(G_i^{(t)})$ for each $i\in [n-1]$.

\begin{claimnum}\label{claim2}
$e_{r+1}=\{r+1,n-r\}\notin E(G_i^{(t)})$ for each $i\in [n-1]$.
\end{claimnum}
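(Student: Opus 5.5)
We argue by contradiction: suppose that $e_{r+1}=\{r+1,n-r\}\in E(G_p^{(t)})$ for some $p\in[n-1]$. We will exhibit a rainbow Hamiltonian path in $\mathbf{G}^{(t)}$. This contradicts the fact, noted above via Theorem \ref{lem2-3}, that $\mathbf{G}^{(t)}$ contains no rainbow Hamiltonian path.

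Here $n=2r+2$, so $n-r=r+2$ and $e_{r+1}=\{r+1,r+2\}$. The edges collected before the claim are as follows. For every $i\in[n-1]$, the graph $G_i^{(t)}$ contains $e'_j=\{j,n-j\}$ for $1\le j\le r$ and $e_j=\{j,n+1-j\}$ for $2\le j\le r$. It also contains $e_1=\{1,n\}$. For $i=1$ this is because $d_{G_1^{(t)}}(1)=n-1$. For the other $i$, it follows from (6) of Observation \ref{ob1} applied to $e_2\in E(G_i^{(t)})$: with $x_1=1\le y_1=2$ and $x_2=y_2=n$ (note $n+1-2=n-1\le n$ is the wrong direction), we only get the edge for graphs where vertex $1$ is adjacent to $n$. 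So we should not rely on $e_1\in E(G_i^{(t)})$ for $i\neq 1$; it is only guaranteed for $i=1$. Fortunately, a rainbow path needs each edge in only one distinct graph.

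We now build the path with the same alternating pattern as in Claim \ref{claim1}, appending $e_{r+1}$ at the end:
\[
e_1e'_1e_2e'_2\cdots e_re'_re_{r+1},
\]
that is, the vertex sequence $n,1,n-1,2,n-2,\dots,r,n-r=r+2,r+1$. This visits all $2r+2$ vertices: $n,n-1,\dots,r+2$ together with $1,\dots,r$ and $r+1$. It uses $2r+1=n-1$ edges, so it is a Hamiltonian path. Assign the colours as follows: $e_1$ to $G_1^{(t)}$, and $e_{r+1}$ to $G_p^{(t)}$. If $p=1$, first note that $e_{r+1}\in E(G_1^{(t)})$; then give $e_{r+1}$ colour $1$ instead and give $e_1$ another colour. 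This needs $e_1$ to lie in some second graph, which is the point to be checked. The remaining $2r-1$ edges $e'_1,\dots,e'_r,e_2,\dots,e_r$ lie in every $G_i^{(t)}$, so they can receive the $n-3=2r-1$ leftover colours injectively.

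The main obstacle is the case $p=1$, where both $e_1$ and $e_{r+1}$ may be available only in $G_1^{(t)}$. To handle it, I would use shiftedness. Since $\{r+1,r+2\}\in E(G_i^{(t)})$ and every vertex is at most $n$, (6) of Observation \ref{ob1} with $x_1=1\le y_1=r+1$ and $x_2=y_2=\dots$ fails to reach $n$. Instead, reroute the path. In every $G_i^{(t)}$ the edge $\{1,n-1\}=e'_1$ exists, so we use the reversed pattern beginning at vertex $1$ and ending with $e_{r+1}$, avoiding $e_1$ entirely. Concretely, we first try the path
\[
n-1,1,n-2,2,\dots\quad\text{combined with}\quad e_j,
\]
with $n$ attached through $e_2$'s shifted consequence $\{2,n\}$? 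That edge is not available ($e_2=\{2,n-1\}$). So vertex $n$ must be attached through an edge $\{j,n\}$ with $j\le r$. If no $G_i^{(t)}$ with $i\ne 1$ has any such edge, then vertex $n$ is isolated in all those graphs. This would force the edge count $e(G_i^{(t)})\le\binom{n-1}{2}$ minus the missing $e_{r+1}$-type edges, which is at most $\binom{n-2}{2}+2$ for $n\in\{6,8\}$, a contradiction with $e(G_i^{(t)})>\binom{n-2}{2}+2$. Hence some $G_q^{(t)}$ with $q\neq 1$ contains an edge $\{j,n\}$, and shiftedness then gives $e_1\in E(G_q^{(t)})$. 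This supplies the second colour needed for $e_1$ and completes the assignment. Verifying this counting bound for $r\in\{2,3\}$ is the step I expect to require the most care.
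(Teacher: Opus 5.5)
Your case split matches the paper's. If $e_{r+1}$ lies in some $G_k^{(t)}$ with $k\neq 1$, you colour $e_1$ by $G_1^{(t)}$ and $e_{r+1}$ by $G_k^{(t)}$. Otherwise $e_{r+1}$ lies only in $G_1^{(t)}$, and you need $e_1$ in a second graph.

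For that second case the paper simply cites Lemma \ref{lem3-2}(i). Since $e_{r+1}=e_{n/2}\notin E(G_k^{(t)})$ for $k\ge 2$, alternative (a) fails, so $G_k^{(t)}\cong K_r\vee(n-r)K_1$ with the clique on $[r]$, and hence $\{1,n\}\in E(G_k^{(t)})$. You instead redo a counting argument, but the decisive step is only asserted. The phrase ``$\binom{n-1}{2}$ minus the missing $e_{r+1}$-type edges'' is never quantified, and you say you have not checked the bound.

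The check genuinely matters. Isolating $n$ and deleting only the single edge $e_{r+1}$ gives $\binom{n-1}{2}-1$, which is $9>8$ for $n=6$ and $20>17$ for $n=8$, so there is no contradiction. What makes it work is shiftedness, applied twice through (6) of Observation \ref{ob1}:
\begin{enumerate}[label=(\arabic*)]
\item With $x_1=r+1\le y_1$ and $x_2=r+2\le y_2$, the absence of $\{r+1,r+2\}$ forces $[r+1,n]$ to be independent in $G_q^{(t)}$.
\item With $x_1=1\le y_1=k$ and $x_2=y_2=n$, the absence of $\{1,n\}$ forces $n$ to be isolated.
\end{enumerate}
Together these give $e(G_q^{(t)})\le\binom{n-1}{2}-\binom{r+1}{2}$. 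This equals $7\le 8$ for $n=6$ and $15\le 17$ for $n=8$, which is the required contradiction; neither fact alone suffices. With this sentence added your argument is complete, though it only re-proves the special case of Lemma \ref{lem3-2} that the paper invokes in one line.

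You should also strip the exploratory material from the write-up. The opening assertion that $e_1\in E(G_i^{(t)})$ for every $i$ is unjustified, and you retract it yourself. The abandoned rerouting attempts play no role in the final argument.
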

\begin{proof}
If $e_{r+1}\in E(G_k^{(t)})$ for some $2\le k\le n-1$, then $e_1e'_1e_2e'_2\cdots e_{r}e'_{r}e_{r+1}$ is a rainbow Hamiltonian path in $\mathbf{G}^{(t)}$, where $e_1\in E(G_1^{(t)})$ and $e_{r+1}\in E(G_k^{(t)})$, a contradiction.

If $e_{r+1}\notin E(G_k^{(t)})$ for each $2\le k\le n-1$ and $e_{r+1}\in E(G_1^{(t)})$, then $G_k^{(t)}\cong K_{r}\vee (n-r)K_1$ with $G_k^{(t)}[\{1,2,\dots,r\}]\cong K_r$ and $G_k^{(t)}[\{r+1,r+2,\dots,n\}]\cong (n-r)K_1$ for each $2\le k\le n-1$ by (b),(c) of Lemma \ref{lem3-2}, and thus $e_1\in E(G_k^{(t)})$. It follows that $e_1e'_1e_2e'_2\cdots e_{r}e'_{r}e_{r+1}$ is a rainbow Hamiltonian path in $\mathbf{G}^{(t)}$, where $e_1\in E(G_2^{(t)})$ and $e_{r+1}\in E(G_1^{(t)})$, a contradiction.

Therefore, $e_{r+1}\notin E(G_i^{(t)})$ for each $i\in [n-1]$.
\end{proof}

By Claims \ref{claim1} and \ref{claim2}, we have (b) or (c) of Lemma \ref{lem3-2} holds, say, $G_i^{(t)}\cong K_{r}\vee (n-r)K_1$ with $G_i^{(t)}[\{1,2,\dots,r\}]\cong K_r$ and $G_i^{(t)}[\{r+1,r+2,\dots,n\}]\cong (n-r)K_1$ for each $i\in [n-1]$, then $G_1=G_2=\dots=G_{n-1}\cong K_r\vee (n-r)K_1$ by \eqref{e4} and Lemma \ref{lem3-3}, and thus either (ii) or (iii) holds since $n=2r+2$ with $r\in\{2,3\}$.

Therefore, the proof of Theorem \ref{thm1-3} is completed.
\end{proof}

\begin{proof}[\noindent\textbf{Proof of Theorem \ref{thm1-4}}]
For $n=2$, the theorem is trivial. For $n=3$, it is straightforward to verify that either $\mathbf{G}$ contains a rainbow Hamiltonian path or $G_1=G_2\cong K_2\cup K_1$. Therefore, we only consider $n\ge 4$ in the rest of the proof. 

We assume that (i) does not hold, now we show that either (ii) or (iii) holds.

Clearly, there exists a finite sequence
\begin{equation}\label{e5}
\mathbf{G}=\mathbf{G}^{(0)},\mathbf{G}^{(1)}=\mathcal{K}_{x_1,y_1}(\mathbf{G}^{(0)}),\dots,\mathbf{G}^{(t)}=\mathcal{K}_{x_t,y_t}(\mathbf{G}^{(t-1)}),
\end{equation}
where $x_s,y_s\in [n]$ and $x_s<y_s$ for each $1\le s\le t$, such that $\mathbf{G}^{(t)}=\{G_1^{(t)},G_2^{(t)},\dots,G_{n-1}^{(t)}\}$ is shifted.
By (1) of Observation \ref{ob1}, $e(G_i^{(t)})=e(G_i)\ge \binom{n-1}{2}$ for each $i\in [n-1]$.
By Theorem \ref{lem2-3}, $\mathbf{G}^{(s)}$ contains no rainbow Hamiltonian path for each $1\le s\le t$.

Now we show that either (I) $G_1^{(t)}=G_2^{(t)}=\dots=G_{n-1}^{(t)}\cong K_{n-1}\cup K_1$, or (II) $n=4$ with $G_1^{(t)}=G_2^{(t)}=G_3^{(t)}\cong K_{1,3}$ by considering the following two cases.

\vspace{6pt}
$\mathbf{Case~1.}$ $n\ge 5$.
\vspace{6pt}

Let $e_j=\{j,n+1-j\}$ for $1\le j\le \left \lfloor \frac{n}{2}  \right \rfloor$ and $e'_j=\{j,n-j\}$ for $1\le j\le \left \lfloor \frac{n-1}{2}  \right \rfloor$. Then $e_j\in E(G_i^{(t)})$ for each $2\le j\le \left \lfloor \frac{n}{2}  \right \rfloor$ and each $i\in [n-1]$ by (ii) of Lemma \ref{lem3-2}, and thus $e'_j\in E(G_i^{(t)})$ for each $1\le j\le \left \lfloor \frac{n-1}{2}  \right \rfloor$ and each $i\in [n-1]$ by (6) of Observation \ref{ob1}.

If $e_1\in E(G_i^{(t)})$ for some $i\in[n-1]$, then $e_1e'_1e_2e'_2\cdots e_{\frac{n-1}{2}}e'_{\frac{n-1}{2}}$ and $e_1e'_1e_2e'_2\cdots e_{\frac{n}{2}-1}e'_{\frac{n}{2}-1}e_{\frac{n}{2}}$ are rainbow Hamiltonian paths in $\mathbf{G}^{(t)}$ for odd $n$ and even $n$, respectively, a contradiction.

Therefore, for each $i\in[n-1]$, $e_1=\{1,n\}\notin E(G_i^{(t)})$, and thus $\{k,n\}\notin E(G_i^{(t)})$ for each $k\in[n-1]$ by (6) of Observation \ref{ob1}, which implies that vertex $n$ is an isolated vertex in $G_i^{(t)}$. Since $e(G_i^{(t)})\ge \binom{n-1}{2}$ for each $i\in [n-1]$, we have $G_1^{(t)}=G_2^{(t)}=\dots=G_{n-1}^{(t)}\cong K_{n-1}\cup K_1$, say, (I) holds.

\vspace{6pt}
$\mathbf{Case~2.}$ $n=4$.
\vspace{6pt}

Since $e(G_i^{(t)})\ge \binom{n-1}{2}=3$ for each $i\in [3]$, by (6) of Observation \ref{ob1}, we have $\{1,2\}\in E(G_i^{(t)})$, otherwise, $e(G_i^{(t)})=0$, a contradiction. Furthermore, at least one of $\{1,4\}$ and $\{2,3\}$ belongs to $E(G_i^{(t)})$ for each $i\in [3]$. Otherwise, there exists some $i\in[3]$ such that $\{1,4\},\{2,3\}\notin E(G_i^{(t)})$, and then $\{3,4\},\{2,4\}\notin E(G_i^{(t)})$ by (6) of Observation \ref{ob1}, which implies $e(G_i^{(t)})\le 2$, a contradiction.

We consider the following sets:
\[
I=\{i\in[3]:\{1,4\}\in E(G_i^{(t)})\}, \qquad J=\{j\in[3]:\{2,3\}\in E(G_j^{(t)})\}.
\]
Then $I\cup J=[3]$. If $I\neq \emptyset$ and $J\neq \emptyset$, then there exist $i\in I$ and $j\in J$ such that $i\neq j$, and thus $\{4,1\}\{1,2\}\{2,3\}$ is a rainbow Hamiltonian path in $\mathbf{G}^{(t)}$, where $\{4,1\}\in E(G_i^{(t)})$, $\{1,2\}\in E(G_k^{(t)})$ with $k\in [3]\setminus\{i,j\}$ and $\{2,3\}\in E(G_j^{(t)})$, a contradiction. Thus $I=\emptyset$ or $J=\emptyset$.

If $I=\emptyset$, then $\{2,4\},\{3,4\}\notin E(G_i^{(t)})$ for each $i\in [3]$ by (6) of Observation \ref{ob1}. It follows from $e(G_i^{(t)})\ge 3$ for each $i\in [3]$ that $G_1^{(t)}=G_2^{(t)}=G_3^{(t)}\cong K_3\cup K_1$ with $d_{G_i^{(t)}}(4)=0$, (I) holds.

If $J=\emptyset$, then $\{2,4\},\{3,4\}\notin E(G_i^{(t)})$ for each $i\in [3]$ by (6) of Observation \ref{ob1}, and thus $E(G_i^{(t)})=\{\{1,2\},\{1,3\},\{1,4\}\}$, which implies $G_1^{(t)}=G_2^{(t)}=G_3^{(t)}\cong K_{1,3}$, (II) holds.

Since (I) or (II) holds, by \eqref{e5}, Lemmas \ref{lem3-4} and \ref{lem3-5}, we have $G_1=G_2=\dots=G_{n-1}\cong K_{n-1}\cup K_1$, or $n=4$ with $G_1=G_2=G_3\cong K_{1,3}$.

This completes the proof of Theorem \ref{thm1-4}.
\end{proof}

\section{Conclusion}
In this paper, we study the existence of rainbow Hamiltonian paths in graph collections under edge-size conditions. We first establish a corrected and improved version of Theorem \ref{thm1-1}, namely Theorem \ref{thm1-3}. Since Theorem \ref{thm1-3} requires at least one graph in the graph collection to be connected, we further investigate the edge-size condition guaranteeing a rainbow Hamiltonian path without connectivity restrictions and obtain Theorem \ref{thm1-4}.
In fact, using a relation between the spectral radius and the number of edges of a graph established by Stanley \cite{Stanley} in 1987, Theorem \ref{thm4-2} follows immediately from Theorem \ref{thm1-4}.

\begin{theorem}[\upshape\cite{Stanley}]\label{thm4-1}
For any graph $G$ of order $n$, we have $\rho^2(G)+\rho(G)\le2e(G)$, with equality if and only if $e(G)=\binom{k}{2}$ and $G\cong K_k\cup (n-k)K_1$ for some positive integer $k$.
\end{theorem}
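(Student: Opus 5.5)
We would prove the inequality by working directly with a nonnegative unit eigenvector for $\rho=\rho(G)$, combining a Cauchy--Schwarz bound at each vertex with a comparison between $\rho$ and a degree-weighted sum. By the Perron--Frobenius theorem for nonnegative symmetric matrices, the adjacency matrix $A$ of $G$ has an eigenvector $x=(x_1,\dots,x_n)$ with $x_i\ge 0$, $\sum_i x_i^2=1$ and $Ax=\rho x$. Thus $\rho x_i=\sum_{j\in N_G(i)}x_j$ for every vertex $i$.

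The first step is the vertex-wise bound
\[
\rho^2x_i^2=\Big(\sum_{j\in N_G(i)}x_j\Big)^2\le d_G(i)\sum_{j\in N_G(i)}x_j^2\le d_G(i)\,(1-x_i^2),
\]
which uses Cauchy--Schwarz and then $\sum_{j\in N_G(i)}x_j^2\le \sum_{j\ne i}x_j^2=1-x_i^2$. Summing over $i$ gives
\[
\rho^2\le 2e(G)-\sum_i d_G(i)x_i^2 .
\]

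The second step compares $\rho$ with this degree-weighted sum:
\[
\rho=x^{T}Ax=\sum_{ij\in E(G)}2x_ix_j\le\sum_{ij\in E(G)}(x_i^2+x_j^2)=\sum_i d_G(i)x_i^2 .
\]
Substituting this into the previous bound yields $\rho^2+\rho\le 2e(G)$.

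For the equality case, the converse direction is a direct check. If $G\cong K_k\cup(n-k)K_1$, then $\rho=k-1$ and $2e(G)=k(k-1)=\rho^2+\rho$.

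The main work, and the step I expect to be the obstacle, is the forward direction, which must handle zero entries of $x$ carefully. Assume equality holds and let $S=\{i:x_i>0\}$; since $\rho\ge0$ and $x\neq 0$, $S\neq\emptyset$. Equality forces every inequality above to be tight.
\begin{enumerate}[label=(\arabic*), font=\upshape, itemsep=2pt, align=left, leftmargin=1em]
 \item Tightness of $2x_ix_j\le x_i^2+x_j^2$ on every edge gives $x_i=x_j$ for every edge $ij$. Consequently no edge joins $S$ to $V\setminus S$, so $G$ splits into $G[S]$ and $G[V\setminus S]$.
 \item For every $i$ with $d_G(i)>0$, tightness of $\sum_{j\in N_G(i)}x_j^2\le 1-x_i^2$ says every $j\neq i$ with $x_j>0$ is a neighbour of $i$. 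Applying this to any $i\in S$ with $d_G(i)>0$ shows that $i$ is adjacent to all of $S\setminus\{i\}$.
 \item Suppose some $i\notin S$ had $d_G(i)>0$. Then by (2), $i$ would be adjacent to every vertex of $S$, giving $\rho x_i\ge\sum_{j\in S}x_j>0$. This contradicts $x_i=0$. Hence every vertex outside $S$ is isolated.
 \item If $|S|\ge2$, then $G[S]$ is connected through the shared value $x$, and by (1) each vertex of $S$ has a neighbour, so (2) makes $S$ a clique. If $|S|=1$, then $\rho=0$ and $e(G)=0$, which is the case $k=1$.
\end{enumerate}
Therefore $G\cong K_k\cup(n-k)K_1$ with $k=|S|$ and $e(G)=\binom{k}{2}$, completing the proof of Theorem \ref{thm4-1}.
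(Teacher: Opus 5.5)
The paper gives no proof of this statement; it quotes it from Stanley. Your proof of the inequality is correct and is essentially Stanley's original argument: Cauchy--Schwarz at each vertex, summed, then $\rho=x^{T}Ax=\sum_{ij\in E(G)}2x_ix_j\le\sum_i d_G(i)x_i^2$. Steps (1)--(3) of your equality analysis are also correct.

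There is a gap in step (4). The claim that each vertex of $S$ has a neighbour does not follow from (1), which only says that $x_i=x_j$ along every edge. The remark that $G[S]$ is ``connected through the shared value'' is unjustified, and it is not needed.

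The claim can in fact fail. If $G$ is edgeless, then $A=0$ and every nonnegative unit vector is a Perron eigenvector. So you may have $|S|\ge 2$ while $G$ has no edges, and then your conclusion $G\cong K_{|S|}\cup(n-|S|)K_1$ is false; the theorem only holds there with $k=1$.

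The fix is to split on $\rho$ rather than on $|S|$:
\begin{itemize}
\item If $\rho=0$, equality gives $2e(G)=\rho^2+\rho=0$, so $G\cong K_1\cup(n-1)K_1$.
\item If $\rho>0$, then for each $i\in S$ the equation $\rho x_i=\sum_{j\in N_G(i)}x_j$ has positive left-hand side, so $d_G(i)>0$. Your step (2) then makes $S$ a clique, and step (3) makes every vertex outside $S$ isolated.
\end{itemize}
With this change the characterization of equality is complete, with $k=|S|$ when $\rho>0$ and $k=1$ when $\rho=0$.
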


\begin{theorem}[\upshape\cite{He}]\label{thm4-2}
Let $n\ge 4$ and $\mathbf{G}=\{G_1,\dots,G_{n-1}\}$ be a collection of not necessarily distinct graphs of order $n$ on the same vertex set $V$. If $\rho(G_i)\ge n-2$ for each $i\in [n-1]$, then $\mathbf{G}$ contains a rainbow Hamiltonian path unless $G_1=G_2=\dots=G_{n-1}\cong K_{n-1}\cup K_1$.
\end{theorem}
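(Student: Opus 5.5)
The plan is to derive Theorem \ref{thm4-2} from Theorem \ref{thm1-4} via Theorem \ref{thm4-1}. The condition $\rho(G_i)\ge n-2$ should convert into $e(G_i)\ge\binom{n-1}{2}$, and then we only need to rule out exception (iii) of Theorem \ref{thm1-4}.

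First, fix $i\in[n-1]$. By Theorem \ref{thm4-1},
\[
2e(G_i)\ge \rho^2(G_i)+\rho(G_i)\ge (n-2)^2+(n-2)=(n-1)(n-2).
\]
Here we use that $x\mapsto x^2+x$ is increasing for $x\ge 0$. Hence $e(G_i)\ge\binom{n-1}{2}$ for every $i\in[n-1]$, so the hypotheses of Theorem \ref{thm1-4} hold with $n\ge 4$. (Equality analysis is not needed, since $\ge$ suffices.)

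Applying Theorem \ref{thm1-4}, either $\mathbf{G}$ contains a rainbow Hamiltonian path, or $G_1=\dots=G_{n-1}\cong K_{n-1}\cup K_1$, or $n=4$ with $G_1=G_2=G_3\cong K_{1,3}$. The first two alternatives are exactly the conclusion of Theorem \ref{thm4-2}.

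It remains to exclude the third. For $n=4$ the hypothesis requires $\rho(G_i)\ge 2$, but $\rho(K_{1,3})=\sqrt{3}<2$. Alternatively, Theorem \ref{thm4-1} gives $\rho^2+\rho\le 6$ with strict inequality since $K_{1,3}$ is not of the form $K_k\cup(n-k)K_1$, so $\rho<2$. Either way the star collection violates the spectral hypothesis, and the proof is complete.

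The only point needing care is this last exclusion, that is, remembering that the $K_{1,3}$ exception in Theorem \ref{thm1-4} is incompatible with $\rho\ge n-2$. The rest is immediate.
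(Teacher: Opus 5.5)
Your proposal is correct and follows essentially the same route as the paper: Stanley's inequality turns $\rho(G_i)\ge n-2$ into $e(G_i)\ge\binom{n-1}{2}$, you apply Theorem \ref{thm1-4}, and you discard the $n=4$, $K_{1,3}$ exception because $\rho(K_{1,3})=\sqrt{3}<2$. Your alternative check of that exclusion, using the strict case of Theorem \ref{thm4-1}, is also valid.
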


\begin{proof}
For each $i\in [n-1]$, since $\rho(G_i)\ge n-2$, by Theorem \ref{thm4-1}, we have $e(G_i)\ge \binom{n-1}{2}$. By Theorem \ref{thm1-4}, $\mathbf{G}$ contains a rainbow Hamiltonian path, or $G_1=G_2=\dots=G_{n-1}\cong K_{n-1}\cup K_1$, or $n=4$ with $G_1=G_2=G_{3}\cong K_{1,3}$.

Clearly, $\rho(K_{n-1}\cup K_1)=n-2$ and $\rho(K_{1,3})=\sqrt{3}<n-2$. Therefore, if $\rho(G_i)\ge n-2$ for each $i\in [n-1]$, then $\mathbf{G}$ contains a rainbow Hamiltonian path unless $G_1=G_2=\dots=G_{n-1}\cong K_{n-1}\cup K_1$.
\end{proof}

\section*{Funding}
This work is supported by the National Natural Science Foundation of China (Grant Nos. 12371347, 12271337).

\section*{Declarations}
\noindent\textbf{Conflict of interest} The authors declare that they have no known competing financial interests or personal relationships that could have appeared to influence the work reported in this paper.\\
\textbf{Data availability} No data was used for the research described in the article.


\section*{Appendix A. Proofs}
We present the proofs of Lemmas \ref{lem3-4} and \ref{lem3-5} here.

\subsection*{A.1. Proof of Lemma \ref{lem3-4}}
\begin{proof}
Without loss of generality, let $w\in[n]$ satisfy $d_{\mathcal{K}_{x,y}(G_i)}(w)=0$ for each $i\in[n-1]$.
Then $w\neq x$. Otherwise, for each $i\in[n-1]$, $yz\in E(\mathcal{K}_{x,y}(G_i))$ and $xz\notin E(\mathcal{K}_{x,y}(G_i))$, where $z\in [n]\setminus\{x,y\}$. This contradicts (2) of Observation \ref{ob1}.

If $w\notin\{x,y\}$, then by (3)-(5) of Observation \ref{ob1}, we readily obtain $G_i=\mathcal{K}_{x,y}(G_i)$ for each $i\in[n-1]$, and the conclusion follows.

If $w=y$, then we set $Z=[n]\setminus\{x,y\}=\{z_1,z_2,\dots,z_{n-2}\}$. By (3)-(5) of Observation \ref{ob1}, for each $i\in[n-1]$, we have $xy\notin E(G_i)$, $G_i[Z]\cong K_{n-2}$, and exactly one of $xz$ and $yz$ belongs to $E(G_i)$ for each $z\in Z$.

Now we show that $xz\in E(G_i)$ for each $z\in Z$ and each $i\in [n-1]$, or $yz\in E(G_i)$ for each $z\in Z$ and each $i\in [n-1]$. If not, there exist distinct $p,q\in[n-1]$ and distinct $z_a,z_b\in Z$ such that $xz_a\in E(G_p)$ and $yz_b\in E(G_q)$. Note that $G_i[Z]\cong K_{n-2}$ for each $i\in[n-1]$. If $n=4$, then $xz_az_by$ is a rainbow Hamiltonian path in $\mathbf{G}$. If $n\ge 5$, we take $Z\setminus\{z_a,z_b\}=\{z'_1,z'_2,\dots,z'_{n-4}\}$, and then $xz_az'_1z'_2\cdots z'_{n-4}z_by$ is a rainbow Hamiltonian path in $\mathbf{G}$. This contradicts the fact that $\mathbf{G}$ contains no rainbow Hamiltonian path.

If $xz\in E(G_i)$ for each $z\in Z$ and each $i\in [n-1]$, then $G_i\cong K_{n-1}\cup K_1$ with $G_i[Z\cup\{x\}]\cong K_{n-1}$ for each $i\in [n-1]$. If $yz\in E(G_i)$ for each $z\in Z$ and each $i\in [n-1]$, then
$G_i\cong K_{n-1}\cup K_1$ with $G_i[Z\cup\{y\}]\cong K_{n-1}$ for each $i\in [n-1]$.
Therefore, $G_1=G_2=\dots=G_{n-1}\cong K_{n-1}\cup K_1$.

This completes the proof of Lemma \ref{lem3-4}.
\end{proof}

\subsection*{A.2. Proof of Lemma \ref{lem3-5}}
\begin{proof}
Without loss of generality, let $w\in[4]$ satisfy $d_{\mathcal{K}_{x,y}(G_i)}(w)=3$ for each $i\in[3]$. Then $w\neq y$. Otherwise, $yz\in E(\mathcal{K}_{x,y}(G_i))$ and $xz\notin E(\mathcal{K}_{x,y}(G_i))$ for each $i\in[3]$, where $z\in [4]\setminus\{x,y\}$. This contradicts (2) of Observation \ref{ob1}.

If $w\notin\{x,y\}$, then $G_1=G_2=G_3=\mathcal{K}_{x,y}(G_1)\cong K_{1,3}$ by (3)-(5) of Observation \ref{ob1}, and the conclusion follows.

If $w=x$, then we set $\{z_1,z_2\}=[4]\setminus\{x,y\}$. By (3)-(5) of Observation \ref{ob1}, for each $i\in[3]$, we have $xy\in E(G_i)$, $z_1z_2\notin E(G_i)$, and exactly one of $xz$ and $yz$ belongs to $E(G_i)$ for each $z\in \{z_1,z_2\}$.

Now we show that $xz_1,xz_2\in E(G_i)$ for each $i\in [3]$, or $yz_1,yz_2\in E(G_i)$ for each $i\in [3]$. Otherwise, there exist distinct $p,q\in[3]$ and distinct $a,b\in[2]$ such that $xz_a\in E(G_p)$ and $yz_b\in E(G_q)$. It follows that $z_axyz_b$ is a rainbow Hamiltonian path in $\mathbf{G}$, a contradiction.

If $xz_1,xz_2\in E(G_i)$ for each $i\in [3]$, then $G_1=G_2=G_3=\mathcal{K}_{x,y}(G_1)\cong K_{1,3}$. If $yz_1,yz_2\in E(G_i)$ for each $i\in [3]$, then $G_i\cong K_{1,3}$ with $d_{G_i}(y)=3$ for each $i\in [3]$.

This completes the proof of Lemma \ref{lem3-5}.
\end{proof}

\end{spacing}
\end{document}